\newtheorem{theorem}{Theorem}
\newtheorem*{theorem*}{Theorem}
\newtheorem{corollary}[theorem]{Corollary}
\newtheorem{lemma}{Lemma}
\numberwithin{lemma}{section}
\theoremstyle{remark}
\newtheorem{rem}{Remark}
\newcommand{\Aut}{\operatorname{Aut}}
\newcommand{\Out}{\operatorname{Out}}
\newcommand{\Soc}{\operatorname{Soc}}
\newcommand{\gexp}{\operatorname{exp}}
\def\ov{\overline}
\begin{document}

%\hfill УДК 512.542
%
%\vspace{1cm}

\title[Criterion of nonsolvability of a finite group]{Criterion of nonsolvability of a finite group\\ and recognition of direct squares of simple groups}

\author{Zh.\,Wang, A.\,V.\,Vasil'ev, M.\,A.\,Grechkoseeva, and A.\,Kh.\,Zhurtov}

\thanks{A.\,V.\,Vasil'ev was supported by National Natural Science Foundation of China (No.~12171126). A.\,V.\,Vasil'ev and M.\,A.\,Grechkoseeva were supported by RAS Fundamental Research Program, project  FWNF-2022-0002.}

\begin{abstract} The spectrum $\omega(G)$ of a finite group $G$ is the set of orders of its elements. The following sufficient criterion of nonsolvability is proved: if among the prime divisors of the order of a group $G$, there are four different primes such that $\omega(G)$ contains all their pairwise products but not a product of any three of these numbers, then $G$ is nonsolvable. Using  this result, we show that for $q\geqslant 8$ and $q\neq 32$, the direct square $Sz(q)\times Sz(q)$ of the simple exceptional Suzuki group $Sz(q)$ is uniquely characterized by its spectrum in the class of finite groups, while for $Sz(32)\times Sz(32)$, there are exactly four finite groups with the same spectrum. 

\noindent\textsc{Key words:} criterion of nonsolvability, simple exceptional group, element orders, recognition by spectrum.

\noindent\textsc{MSC:} 20D60, 20D06.
 \end{abstract}

\maketitle

\section*{Introduction}

Let $G$ be a finite group (in what follows, all groups are assumed to be finite). The set of prime divisors of its order and the set of its element orders are denoted by $\pi(G)$ and $\omega(G)$, respectively, the latter set for brevity we call the \emph{spectrum} of the group~$G$. Groups $G$ and $H$ are called \emph{isospectral} if $\omega(G)=\omega(H)$. A group $G$ is called \emph{recognizable by spectrum} if any group isospectral to $G$ is isomorphic to $G$. More details about recognition of simple and related groups by spectrum can be found in the recent survey~\cite{21Survey_arxiv}. The present paper concerns the recognition of direct squares of finite simple groups.

As a rule, the first step in proving the recognizability of nonsolvable groups is to establish that the groups isospectral to them are also nonsolvable, so we start with the following sufficient criterion of nonsolvability of a finite group, which may be of independent interest.

\begin{theorem}\label{th:criterion}
Let $G$ be a finite group. Suppose that there is a subset $\sigma(G)$ of $\pi(G)$ 
such that $|\sigma(G)|\geqslant 4$ and the following hold:
\begin{enumerate}
\item[(1)] $pq\in\omega(G)$ for all distinct $p,q\in \sigma(G);$
\item[(2)] $pqr\not\in\omega(G)$ for all pairwise distinct $p,q,r\in \sigma(G).$
\end{enumerate}

Then $G$ is nonsolvable. In particular, there is no such group of odd order.
\end{theorem}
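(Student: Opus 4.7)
The plan is to argue by contradiction. Suppose $G$ is a solvable counterexample of least order; without loss of generality $|I|=4$. Since $G$ is solvable, by P.\,Hall's theorem it contains a Hall $\sigma(G)$-subgroup $H$, and every cyclic subgroup of $G$ of $\sigma(G)$-order is conjugate into $H$. Hence $H$ inherits conditions~(1) and~(2), and by the minimality of $G$ we have $H=G$, i.e.\ $\pi(G)=\{p_1,p_2,p_3,p_4\}$.

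Next, I bound the Fitting subgroup $F=F(G)$. Being nilpotent, $F$ is the direct product of its Sylow subgroups, so it contains a cyclic subgroup whose order is the product of all primes in $\pi(F)$. If $|\pi(F)|\geq 3$, this cyclic subgroup violates~(2). If $F=G$, then $G$ itself is nilpotent on four primes and contains an element of order $p_1p_2p_3p_4$, again violating~(2). So $1\leq|\pi(F)|\leq 2$ and $F$ is a proper subgroup of $G$.

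By Fitting's theorem, $C_G(F)\leq F$, so $G/F$ acts faithfully on $F$, and the spectral conditions translate into fixed-point statements. For each $p\in\pi(F)$ and each $p_j\in\sigma(G)$ with $p\ne p_j$, condition~(1) forces some $p_j$-element of $G$ (acting on $F$ through $G/F$ when $p_j\notin\pi(F)$) to centralize a nontrivial element of $F$. For $p_j,p_k\in\sigma(G)\setminus\pi(F)$, condition~(1) provides a commuting pair of $p_j$- and $p_k$-elements in $G/F$. Conversely, by Schur--Zassenhaus every cyclic subgroup of $G/F$ of order coprime to $|F|$ lifts to a cyclic subgroup of $G$ of the same order, so~(2) propagates to $G/F$ for triples of primes coprime to $|F|$. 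The strategy is then to combine these data to produce a commuting pair $(y,z)$ of $p_j$- and $p_k$-elements in $G/F$ having a common nontrivial fixed point in $F$; lifting this configuration via Schur--Zassenhaus yields an element of $G$ of order $p\cdot p_j\cdot p_k$ with $p\in\pi(F)$, contradicting~(2).

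The main obstacle is precisely this fixed-point / coprime-action argument. The hypothesis $|I|\geq 4$ is essential: for three primes the analogous statement is false, as shown by solvable $\{p,q,r\}$-groups of the form $\mathbb{F}_p^2\rtimes(C_q\times C_r)$ with $C_q$ and $C_r$ acting faithfully on distinct one-dimensional summands, where $\{pq,pr,qr\}\subseteq\omega$ but $pqr\notin\omega$. Thus the proof must genuinely use the fourth prime: the four Hall $\{p_i,p_j,p_k\}$-subgroups of $G$ each realize a three-prime obstruction pattern, and one must show that these four patterns cannot coexist in a single solvable group. I would expect this to rely on a preparatory lemma classifying the structure of minimal solvable three-prime obstructions, together with coprime-action lemmas of Glauberman type applied to the action of $G/F$ on $F/\Phi(F)$.
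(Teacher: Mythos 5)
Your opening reduction is exactly the paper's: minimal counterexample, Hall $\sigma$-subgroup, hence $G$ is a $4$-primary solvable group; and your bound $|\pi(F(G))|\leqslant 2$ with $F(G)<G$ and $C_G(F(G))\leqslant F(G)$ also appears there (as consequences of their Lemmas on Hall subgroups and the Fitting subgroup). But from that point on your text is a statement of intent rather than a proof. The entire content of the theorem lives in ruling out the two cases $|\pi(F)|=1$ and $|\pi(F)|=2$, and you explicitly defer this (``the main obstacle is precisely this fixed-point / coprime-action argument''), expecting it to follow from some unspecified classification of three-prime obstructions plus ``Glauberman type'' lemmas. No such argument is supplied, and the step you describe --- upgrading the existence of a $p_j$-element with a fixed point on $F$ and a $p_k$-element with a fixed point on $F$ to a \emph{commuting pair with a common} nontrivial fixed point --- is precisely what fails in the three-prime examples and is the whole difficulty. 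Nothing in your outline explains how the fourth prime forces it.

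Concretely, the paper gets through these two cases by a chain of structural lemmas that your sketch does not touch: every prime in $\pi(G)$ admits a noncyclic subgroup of order $p^2$ (via the observation that otherwise a subgroup of order $p$ has a $4$-prime centralizer, contradicting Higman's theorem on groups of prime-power element orders); every minimal normal subgroup is a full Sylow subgroup, so $F$ is a product of elementary abelian Sylow subgroups; normal primary subgroups of the complement $H$ are either Sylow of prime exponent, cyclic of odd order, or have a characteristic subgroup of order $2$. The contradictions are then extracted not from a general coprime-action principle but from three specific tools --- the lemma that $P\rtimes(R_1\times\dots\times R_k)$ with each $R_i$ elementary abelian of order $r_i^2$ has an element of order $pr_1\cdots r_k$, a Frobenius-kernel fixed-point lemma, and a Hall--Higman type lemma whose exceptional case ($r=2$, $s$ Fermat) has to be handled separately --- combined in a lengthy case analysis (e.g.\ a commutator computation with $[g,xy]=[g,y][g,x]^y$ in the $|\pi(F)|=2$ case, and a delicate analysis of $F(H)$ and a Hall $\{r,s\}$-subgroup in the $|\pi(F)|=1$ case). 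Your proposal identifies the right battlefield but does not fight the battle, so as it stands it is not a proof.
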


\begin{rem}
Theorem~\ref{th:criterion} is wrong if $|\sigma(G)|=3$. It suffices to consider a $3$-primary group $G$ with $\pi(G)=\{p,q,r\}$, which is the direct product of two Frobenius groups whose kernels are a $p$-group and $q$-group, and the complements are $r$-groups.
\end{rem}

\begin{rem}
If $|\sigma(G)|\geqslant6$, then Theorem~\ref{th:criterion} follows from \cite[Theorem~1]{95Zha}. Indeed, let $G$ satisfy the conditions of Theorem~\ref{th:criterion} and $\sigma=\sigma(G)$. If $G$ is solvable, then it includes a Hall $\sigma$-sub\-group $H$ and, by Condition 2, the inequality $\alpha(H)\leqslant2$ holds. Here, as in~\cite{95Zha}, we denote by $\alpha(H)$ the maximum of $|\pi(\langle x\rangle)|$ for all elements $x$ of~$H$. Applying \cite[Theorem~1]{95Zha}, we obtain that $$|\pi(H)|\leq\alpha(H)(\alpha(H)+3)/2\leqslant5;$$ a contradiction. If $|\sigma(G)|\leqslant5$, then, as examples from~\cite{94Kel,95Zha} show, it is impossible to derive the conclusion of Theorem~\ref{th:criterion} applying Condition~2 only.
\end{rem}

\begin{rem} If $\sigma(G)$ satisfies the additional condition:
$p$ does not divide $q-1$ for all $p,q\in \sigma(G)$, then the conclusion of Theorem~\ref{th:criterion} was proved in~\cite[Proposition~1]{21GorMas}.
\end{rem}

Before returning to recognition, we introduce for convenience one more concept related to element orders: the prime graph $GK(G)$ of a group $G$ is a graph with vertex set $\pi(G)$, in which two vertices $p $ and $q$ are adjacent if and only if $p\neq q$ and $pq\in\omega(G)$. Denote by $t(G)$ the maximum number of pairwise nonadjacent vertices in~$GK(G)$. If the group $L$ satisfies the condition $t(L)\geqslant4$, then it is easy to see that the direct square $L\times L$ satisfies the conditions of Theorem \ref{th:criterion}, and we get

\begin{corollary}\label{c:sqr} Let $L$ be a finite group and $t(L)\geqslant 4$. If $G$ is a finite group such that $\omega(G)=\omega(L\times L)$, then $G$ is nonsolvable.
 \end{corollary}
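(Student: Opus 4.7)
The plan is to reduce Corollary~\ref{c:sqr} directly to Theorem~\ref{th:criterion} by producing, inside $\pi(G)$, a subset $\sigma$ of four primes satisfying conditions (1) and (2). The natural candidate is any antichain of size four in the prime graph $GK(L)$, which exists by the assumption $t(L)\geqslant 4$. I would pick such primes $p_1,p_2,p_3,p_4$ in $\pi(L)$; since $\omega(G)=\omega(L\times L)$ forces $\pi(G)=\pi(L\times L)\supseteq\pi(L)$, these primes lie in $\pi(G)$ as required. Throughout I will use the standard identity $\omega(A\times B)=\{\operatorname{lcm}(a,b):a\in\omega(A),\,b\in\omega(B)\}$ together with the fact that $\omega$ is closed under taking divisors.

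Condition~(1) is immediate. For distinct $i,j$, take $x\in L$ of order $p_i$ and $y\in L$ of order $p_j$; then $(x,y)\in L\times L$ has order $\operatorname{lcm}(p_i,p_j)=p_ip_j$, so $p_ip_j\in\omega(L\times L)=\omega(G)$.

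The main step is checking condition~(2). Suppose, aiming for a contradiction, that $p_ip_jp_k\in\omega(L\times L)$ for some pairwise distinct $i,j,k\in\{1,2,3,4\}$. Then there exist $x,y\in L$ of orders $a$ and $b$ with $\operatorname{lcm}(a,b)=p_ip_jp_k$; in particular each of $a,b$ is squarefree with prime divisors in $\{p_i,p_j,p_k\}$. The pairwise nonadjacency of $p_i,p_j,p_k$ in $GK(L)$ means that no product of two distinct ones lies in $\omega(L)$, and since $\omega(L)$ is closed under divisors, no element of $L$ can have order divisible by two of these primes. Hence each of $a,b$ belongs to $\{1,p_i,p_j,p_k\}$, so $\operatorname{lcm}(a,b)$ has at most two prime divisors, contradicting $\operatorname{lcm}(a,b)=p_ip_jp_k$.

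Thus $\sigma=\{p_1,p_2,p_3,p_4\}\subseteq\pi(G)$ satisfies the hypotheses of Theorem~\ref{th:criterion}, so $G$ is nonsolvable. I do not anticipate a genuine obstacle here: the corollary is a straightforward packaging of the theorem with the spectrum-of-a-direct-product formula, and the only point requiring any care is the divisor-closure argument that forces the component orders $a,b$ to be $1$ or a single $p_\ell$.
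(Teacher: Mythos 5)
Your proof is correct and follows exactly the route the paper intends: the paper simply asserts that ``it is easy to see'' that $L\times L$ satisfies the hypotheses of Theorem~\ref{th:criterion} when $t(L)\geqslant 4$, and your verification of conditions (1) and (2) via the formula $\omega(L\times L)=\{\operatorname{lcm}(a,b):a,b\in\omega(L)\}$ and divisor-closure is precisely the omitted check.
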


\begin{rem} The condition $t(L)\geqslant 4$ is satisfied for the vast majority of simple groups $L$, namely, for alternating groups of degree at least~$19$, classical groups of dimension at least~$11$, and all exceptional groups except $G_2(q)$, $^3D_4(q)$, and $^2F_4(2)'$ (see more in~\cite{11VasVd.t}).
\end{rem}

Despite the fact that there are a huge number of simple groups recognizable by spectrum (see \cite[Theorem 1]{21Survey_arxiv}), there are only two examples of a recognizable group that is a direct square of a simple group: $Sz(2^7)\times Sz(2^7)$ \cite{97Maz1.t} and $J_4\times J_4$ \cite{21GorMas}. The reason for this is that the standard methods of proving the nonsolvability of groups isospectral to a simple group are based on properties of its prime graph. Clearly, they are not applicable to proving the nonsolvability of a group isospectral to the square of a simple group, since in this case the corresponding graph is always complete. Proposition~1 of \cite{21GorMas}, mentioned in Remark 3, is a significant advance in this direction, but it is difficult to verify the conditions of this proposition for squares of arbitrary simple groups. As Corollary~\ref{c:sqr} shows, Theorem~\ref{th:criterion} of the present paper does not have this disadvantage and therefore allows us to hope for new examples of recognizable groups that are the squares of simple groups.

In this paper we consider the direct squares of simple Suzuki groups~$Sz(q)$. Recall that $Sz(q)={}^2B_2(q)$, where $q=2^\alpha$ with odd $\alpha$, is a twisted exceptional group of Lie type. It is well known that $Sz(q)$ is a nonabelian simple group if $q\geqslant 8$ and a Frobenius group of order $5\cdot 4$ if $q=2$. As mentioned above, $Sz(2^7)\times Sz(2^7)$ is uniquely (up to isomorphism) characterized by its spectrum in the class of finite groups~\cite{97Maz1.t}. We prove the following generalization of this result.

\begin{theorem}\label{th:suzuki}
If $q\geqslant 8$ and $q\neq 32$, then the group $Sz(q)\times Sz(q)$ is recognizable by spectrum.
\end{theorem}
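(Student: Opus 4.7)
The plan is to apply the standard recognition-by-spectrum scheme to $L=Sz(q)\times Sz(q)$ in three steps: first, establish nonsolvability of any $G$ isospectral to $L$; second, identify the nonabelian composition factors of $G$; third, rule out every structure other than $L$ itself.

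First, I would verify the hypothesis of Corollary~\ref{c:sqr}. The primes dividing $|Sz(q)|=q^{2}(q-1)(q^{2}+1)$ split into four pairwise coprime blocks $\{2\}$, $\pi(q-1)$, $\pi(q+\sqrt{2q}+1)$, $\pi(q-\sqrt{2q}+1)$, all nonempty for $q\geqslant 8$, and every cyclic subgroup of $Sz(q)$ has order dividing one of $4$, $q-1$, $q\pm\sqrt{2q}+1$. Choosing one prime from each block therefore gives an independent set in $GK(Sz(q))$, so $t(Sz(q))\geqslant 4$ and Corollary~\ref{c:sqr} yields the nonsolvability of any $G$ with $\omega(G)=\omega(L)$.

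Second, since $q=2^{\alpha}$ with $\alpha$ odd gives $q\equiv 2\pmod 3$ and hence $3\nmid(q-1)(q^{2}+1)$, the prime $3$ is absent from $\pi(L)=\pi(G)$. By Thompson's theorem on finite simple groups of order coprime to $3$, every nonabelian composition factor of $G$ is a Suzuki group. Letting $K$ denote the solvable radical of $G$, we obtain
\begin{equation*}
\Soc(G/K)\cong Sz(q_{1})\times\cdots\times Sz(q_{k}),
\end{equation*}
with each $q_{i}=2^{\alpha_{i}}$, $\alpha_{i}$ odd, and $\pi(Sz(q_{i}))\subseteq\pi(Sz(q))$ for every $i$. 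The next task is to force $k=2$ and $q_{1}=q_{2}=q$. The lower bound $k\geqslant 2$ follows from the existence in $\omega(L)$ of element orders such as $(q-1)(q+\sqrt{2q}+1)$, whose prime divisors lie in two distinct torus-blocks and cannot occur as an element order in any single Suzuki group; the upper bound $k\leqslant 2$ follows from Theorem~\ref{th:criterion} applied to $G$, since three Suzuki factors would produce in $\omega(G)$ a product of three pairwise nonadjacent primes, contradicting the block description of $\omega(L)$. Pinning down $q_{i}=q$ is then a Zsigmondy-type comparison of the torus orders of $Sz(q')$ with $\omega(L)$. With the socle identified, I would show $K=1$ and $G=\Soc(G/K)$ by verifying that any nontrivial normal $p$-subgroup, or any nontrivial extension by field automorphisms of $Sz(q)\times Sz(q)$, would introduce new element orders not in $\omega(L)$.

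The principal obstacle, and the reason for the exception $q=32$, is concentrated in this last step. When $\alpha=5$ the field automorphism of $Sz(32)$ has order $5$, and this order already divides $q-\sqrt{2q}+1=25$; the resulting coincidence is known to make several nonisomorphic groups built from $Sz(32)$ and $Sz(32).5$ isospectral to $Sz(32)\times Sz(32)$. Showing that no analogous collision occurs for $q\neq 32$, and thereby eliminating nontrivial solvable radicals and nontrivial outer extensions uniformly in $\alpha$, is the substantive computational core of the proof and the step where the exceptional value $q=32$ must be visibly set aside.
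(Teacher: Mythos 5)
Your overall scheme (nonsolvability via Corollary~\ref{c:sqr}, Suzuki composition factors from $3\notin\pi(G)$, bounding the number $k$ of socle factors, then killing the solvable radical and the outer part) is the same as the paper's, and your diagnosis of why $q=32$ is exceptional is accurate. But there is a genuine gap at the step where you force $k\geqslant 2$. You argue that $(q-1)(q+\sqrt{2q}+1)\in\omega(G)$ cannot be an element order ``in any single Suzuki group,'' hence $k\neq 1$. This is a non sequitur: $G$ is an extension of the solvable radical $K$ by $\overline G$, and an element of $G$ of order $(q-1)(q+\sqrt{2q}+1)$ need not live in (or project faithfully to) the socle of $\overline G$ --- its order can be assembled from a part inside $K$ and a part above it. Ruling this out is precisely the hard core of the paper's proof: for $k=1$ with $\alpha_1\neq\alpha$ one shows that primitive prime divisors $r_2, r_4$ of $2^\alpha-1$ and $2^{4\alpha}-1$ together with $2$ and a suitable $s$ force a solvable subgroup $KT$ satisfying the hypotheses of Theorem~\ref{th:criterion}; for $k=1$ with $\alpha_1=\alpha$ one must prove that $L$ is a direct factor of its full preimage (which needs Lemma~\ref{l:SuzRep}, i.e.\ the Tiep--Zalesski minimal polynomial theorem, Lemma~\ref{l:frob}, and the triviality of the Schur multiplier, with a separate argument for $q=8$ where the multiplier is nontrivial), and only then does a coclique count give a contradiction. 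None of this is replaceable by the one-line torus-block observation you give.

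Two smaller points. First, your bound $k\leqslant 2$ is obtained by the right idea (three socle factors yield a product of three primes from three distinct blocks, which is not in $\omega(L\times L)$), but invoking Theorem~\ref{th:criterion} here is misplaced: that theorem concludes nonsolvability, which is already known; the actual contradiction is simply $4ps\in\omega(G)\setminus\omega(L\times L)$. Second, in the $k=2$ case the assertion that a nontrivial normal $p$-subgroup ``would introduce new element orders'' again hides the substantive tool: for odd $p$ one needs the fixed-point statement of Lemma~\ref{l:SuzRep} applied twice (once for each factor) to manufacture an element of order $prs$, and for $p=2$ one needs Lemma~\ref{l:frob} applied to a Frobenius subgroup of order $4(q-\sqrt{2q}+1)$ to produce an element of order $8$. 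As written, your proposal identifies the correct skeleton but omits the arguments that actually carry the proof.
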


\begin{theorem}\label{th:suzuki32}
Let $L=Sz(32)$. The finite groups isospectral to $L\times L$ are exactly 
the groups of the form $(L\times L)\rtimes \langle \varphi\rangle$, where either $\varphi=1$, or $\varphi$ normalizes each direct factor and induces  
a field automorphism of order $5$ on it. In particular, up to isomorphism,  there are four finite groups isospectral to $L\times L$.
\end{theorem}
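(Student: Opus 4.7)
The plan is to apply Corollary~\ref{c:sqr} to reduce to a nonsolvable $G$, identify the layer $E(G)$ as $L\times L$, eliminate the solvable radical, and enumerate the admissible extensions inside $\Aut(L\times L)$. Set $\omega=\omega(L\times L)$ and observe that $\omega(L)=\{1,2,4,5,25,31,41\}$ splits into four ``columns'' $\{2,4\}$, $\{5,25\}$, $\{31\}$, $\{41\}$ whose entries are pairwise coprime, so $\omega$ consists exactly of the $\mathrm{lcm}$s drawn from at most two of these columns. Since $\{2,5,31,41\}$ is an independent set in $GK(L)$, $t(L)\geqslant 4$, and Corollary~\ref{c:sqr} yields that any $G$ isospectral to $L\times L$ is nonsolvable; from $3\notin\pi(\omega)$ we get $3\nmid|G|$. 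By the classical consequence of CFSG that every finite nonabelian simple group of order prime to $3$ is a Suzuki group $Sz(2^{2k+1})$, combined with a direct check using $q^2+1=(q-\sqrt{2q}+1)(q+\sqrt{2q}+1)$ for $q=2^{2k+1}$, the only such group with $\pi\subseteq\{2,5,31,41\}$ is $L=Sz(32)$ itself. Thus every nonabelian composition factor of $G$ is $L$; since $L$ has trivial Schur multiplier, $E(G)\cong L^k$. Now $L^k$ for $k\geqslant 3$ contains an element of order $25\cdot 31\cdot 41\notin\omega$, whereas $L^1$ has no element of order $25\cdot 31\in\omega$, forcing $k=2$ and $E(G)\cong L\times L$.

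The main technical step is showing $R:=\mathrm{Sol}(G)=1$. Put $C=C_G(E(G))$; generalized-Fitting theory yields $C\cap E(G)=1$, $CE(G)=C\times E(G)$, and $C\leqslant R$ (a nonabelian composition factor of $C$ would be a component of $G$ disjoint from $E(G)$). First I treat the subcase in which $E(G)$ centralizes $R$: here $R\times E(G)\leqslant G$, so for $r\in R$ of prime order $p\in\{2,5,31,41\}$ and $e\in E(G)$ we need $\mathrm{lcm}(p,|e|)\in\omega$; running $|e|\in\{775,1025,1271\}$ produces products spanning three columns for each admissible $p$, forcing $R=1$. In the complementary subcase the kernel of $E(G)\to\Aut(R)$ is a proper normal subgroup of $L\times L$, so after relabelling $L\times 1$ centralizes $R$ and $1\times L$ acts faithfully on $R$; then $(L\times 1)\times(R\rtimes L)\leqslant G$. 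For each prime $p\in\pi(R)$ and each element $\ell\in 1\times L$ whose order $n\in\{25,31,41\}$ lies in a column different from $\{p\}$, we must have $C_{O_p(R)}(\ell)=1$: otherwise a fixed element of order $p$ together with $\ell$ gives a commuting pair of order $np$, and multiplying by a suitable element of $L\times 1$ yields an order spanning three columns, contradicting $\omega(G)=\omega$. Systematically exhausting $n\in\{25,31,41\}$, and combining coprime-action lifting with the minimum dimension of a faithful $\mathbb{F}_pL$-module, one deduces $O_p(R)=1$ for each $p\in\{2,5,31,41\}$, hence $R=1$. This non-centralizing subcase is the main obstacle of the proof: each individual column-violating product is routine, but aggregating them requires careful control over how $Sz(32)$ can act on a solvable $\{2,5,31,41\}$-group without violating the $\omega$-constraints.

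With $R=1$, we have $L\times L\leqslant G\leqslant\Aut(L\times L)=\Aut(L)\wr\mathrm{Sym}(2)$ and $\Out(L)=\mathbb{Z}/5$. If some $\sigma\in G$ projects to the factor-swap coset in $\Out(L\times L)$, write $\sigma=(a,b)\tau$ so $\sigma^2=(ab,ba)$; letting $(l_1,l_2)\in L\times L$ vary lets $ab$ range over the entire $L$-coset of $\Aut(L)$ with $\Out$-class $\bar a\bar b$, and choosing $l_1,l_2$ suitably realizes $|ab|=4$ (when $\bar a\bar b$ is trivial) or $|ab|=20$ (otherwise), so $|\sigma|\in\{8,40\}\not\subseteq\omega$, a contradiction. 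Hence $G/(L\times L)\leqslant\Out(L)^2=(\mathbb{Z}/5)^2$. If $G/(L\times L)$ contains $(0,b)$ with $b\neq 0$, then $G\supseteq L\times(L\rtimes\langle b\rangle)$ has an element of order $\mathrm{lcm}(31,10)=310\notin\omega$; so the only remaining possibilities are $G/(L\times L)\in\{1\}\cup\{\langle(a,b)\rangle: a,b\neq 0\}$. For $\varphi=(a,b)$ with both coordinates nontrivial, the extension $(L\times L)\rtimes\langle\varphi\rangle$ is isospectral to $L\times L$ because every $g=(x,y)\varphi^i\in G\setminus(L\times L)$ satisfies $g^5\in C_L(\varphi_1)\times C_L(\varphi_2)\cong Sz(2)\times Sz(2)$, so $|g|\in\{5,10,20,25,50,100\}\subseteq\omega$. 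Up to $\mathbb{F}_5^\ast$-scaling and the outer swap of $L\times L$ (which identifies $b/a$ with $a/b$), the nontrivial cases partition into three classes represented by $b/a\in\{1,2,4\}$, plus the trivial extension: exactly four groups.
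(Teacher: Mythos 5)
Your overall plan (nonsolvability via Corollary~\ref{c:sqr}, Martineau's theorem to force Suzuki composition factors, killing the solvable radical, then enumerating subgroups of $\Aut(L\times L)$) matches the paper's, and your endgame --- ruling out the factor swap via an element of order $8$, computing $\omega((L\times L)\rtimes\langle\psi\rangle)$, and counting conjugacy classes of cyclic subgroups $\langle(a,b)\rangle\leqslant(\mathbb{Z}/5)^2$ under the swap to get four groups --- is essentially the paper's argument and is fine. The genuine gap is in the middle, at exactly the point where the paper does most of its work. Your step ``$L^1$ has no element of order $25\cdot31\in\omega$, forcing $k=2$'' is a non sequitur: an element of $G$ of order $25\cdot 31$ need not lie in $E(G)$, so a single component does not prevent $\omega(G)=\omega(L\times L)$. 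Likewise your justification ``$C=C_G(E(G))\leqslant\mathrm{Sol}(G)$ because a nonabelian composition factor of $C$ would be a component'' is false: a nonsolvable group can have trivial layer (e.g.\ $V\rtimes L$ with $V$ a faithful irreducible module has no subnormal quasisimple subgroup), so $C$ could a priori be a nonsolvable group with $E(C)=1$, and configurations such as $(V\rtimes L)\times L$ --- which have $E(G)\cong L$, $k=1$, yet contain commuting pairs realizing all of $\mu(L\times L)$ --- are not excluded by anything you wrote. Eliminating precisely this possibility is the hardest part of the paper's proof: in the $k=1$ case it shows $\alpha_1=\alpha$ (via a solvable $\{2,r_2,r_4,s\}$-subgroup contradicting Theorem~\ref{th:criterion}), then proves by induction on $|K|$, using Lemma~\ref{l:SuzRep} (a Hall--Higman-type result of Tiep--Zalesski), Lemma~\ref{l:frob}, the triviality of the Schur multiplier and the three-subgroups lemma, that $L$ splits off as a direct factor of its preimage, and finally derives a contradiction from $\omega(K)\subseteq\omega(L)$ together with Lemmas~\ref{l:3coclique} and~\ref{l:large_orders}(3). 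None of this is replaced by your sketch.

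Two further, smaller problems in your treatment of $R\neq1$: your case division by the kernel of $E(G)\to\Aut(R)$ omits the case where the kernel is trivial, i.e.\ both factors act faithfully on $R$; and the tools you invoke (``coprime-action lifting'' and ``the minimum dimension of a faithful $\mathbb{F}_pL$-module'') do not yield the needed conclusions. For odd $p$ the paper needs the much stronger statement of Lemma~\ref{l:SuzRep} (every coset $Vg$ with $g$ an odd-order $r$-element contains an element of order $p|g|$), and for $p=2$ the contradiction comes not from a three-column product but from an element of order $8$, produced by applying Lemma~\ref{l:frob} to a Frobenius subgroup of order $4\cdot m_3(q)$ with cyclic complement of order $4$; your sketch supplies neither mechanism. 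The final isomorphism count and the verification that the order-$5$ field-automorphism extensions are isospectral are correct in conclusion (though your assertion that $g^5\in C_L(\varphi_1)\times C_L(\varphi_2)$ is not literally true; the correct input is Lemma~\ref{l:large_orders}(1), which gives that the orders in the coset $L\psi_i$ are exactly $5\cdot\omega(Sz(2))$).
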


Note that the recognizability by spectrum of the simple Suzuki groups themselves was established by W. Shi~\cite{92Shi} about 30 years ago. It is also worth noting that the proof of Theorem~\ref{th:criterion} does not depend on the classification of finite simple groups, while the proof of Theorems~\ref{th:suzuki} and \ref{th:suzuki32} uses only the assertion proved back in 1977 (see \cite{77Pod.t}, \cite{77Gor}, or \cite{77FSS}) that nonabelian simple groups whose orders are not divisible by $3$ are exactly the Suzuki groups.

The article is organized as follows. In the first section, the necessary preliminary information and results are collected, in the second, the nonsolvability criterion from Theorem~\ref{th:criterion} is proved, and in the third, the recognizability of the squares of Suzuki groups is established.

Concluding the introduction, the authors would like to express their deep gratitude and admiration to Viktor Danilovich Mazurov whose article~\cite{97Maz1.t}, published in {\em Algebra and Logic} 25 years ago, served as a source of inspiration for this paper. Also the authors are grateful to A.\,A.\,Buturlakin for valuable comments.

\section{Preliminaries}

For a natural number~$n$, $\pi(n)$ denotes the set of prime divisors of~$n$. If $\pi$ is the set of primes, then $\pi'$ is the set of primes not in~$\pi$.

Let $G$ be a group. The set $\omega(G)$ is closed with respect to taking divisors; therefore, it is uniquely defined by its subset $\mu(G)$ consisting of elements maximal with respect to divisibility relation. The exponent of $G$ is denoted by~$\gexp(G)$. Also, $\Aut G$ and $\Out G$ are respectively the automorphism group and the outer automorphism group of~$G$, $F(G)$ is its Fitting subgroup, and $\Soc(G)$ is its socle, i.e., the product of the minimal normal subgroups. If $G$ is a $p$-group, then $\Omega_1(G)$ is its subgroup generated by elements of order~$p$. If a group $B$ acts on a group $A$, then $A\rtimes B$ denotes their natural semidirect product.

\begin{lemma}[{Bang \cite{86Bang}, Zsigmondy \cite{Zs}}]\label{l:bangz} Let $q,n\geqslant 2$ be integers. Then either there is a prime number $r$ that divides $q^n-1$ and does not divide $q^i-1$ for all $i<n$, or one of the following conditions is satisfied:
\begin{enumerate}
 \item[(1)] $q=2$ and $n=6$;
 \item[(2)] $q$ is a Mersenne prime and $n=2$.
 \end{enumerate}
\end{lemma}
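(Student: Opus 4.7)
The plan is to prove Zsigmondy's theorem via cyclotomic polynomials. Recall the factorization $q^n-1=\prod_{d\mid n}\Phi_d(q)$, where $\Phi_d$ is the $d$-th cyclotomic polynomial. A prime $r$ with $r\mid q^n-1$ and $r\nmid q^i-1$ for $i<n$ is precisely one for which the multiplicative order of $q$ modulo $r$ equals $n$, so such a primitive prime divisor must in fact divide $\Phi_n(q)$. The task is therefore to analyze $\Phi_n(q)$ and show that it has a prime divisor of the required type outside the two listed exceptions.

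First I would establish the key dichotomy for primes $r\mid\Phi_n(q)$: either $\gcd(r,n)=1$, and then $r$ is automatically a primitive prime divisor (since $r\mid\Phi_d(q)$ for some $d\leqslant n$ forces $n\mid d$, hence $d=n$); or $r\mid n$, and a standard valuation argument (a lifting-the-exponent lemma for $\Phi_n$) shows that in this case $r$ is the largest prime divisor of $n$, and $r$ divides $\Phi_n(q)$ to the first power only. Consequently, if no primitive prime divisor exists, then $\Phi_n(q)$ is either $1$ or a prime $r$ dividing $n$, and in particular $\Phi_n(q)\leqslant n$.

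Next I would derive the archimedean lower bound $\Phi_n(q)\geqslant(q-1)^{\varphi(n)}$ from $\Phi_n(q)=\prod_\zeta(q-\zeta)$ over primitive $n$-th roots of unity, upgraded (using $|q-\zeta|\geqslant q-1$ and better estimates pairing conjugate roots) to something like $\Phi_n(q)>q^{\varphi(n)/2}$ for $q,n$ not very small. Comparing this with the upper bound $\Phi_n(q)\leqslant n$ from the previous step yields an inequality $q^{\varphi(n)/2}\leqslant n$, which leaves only a short finite list of candidate pairs $(q,n)$.

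Finally, I would dispatch this finite list by direct computation of $\Phi_n(q)$, confirming that the only genuine failures are $(q,n)=(2,6)$, where $\Phi_6(2)=3\mid 2^2-1$, and $n=2$ with $q=2^p-1$ a Mersenne prime, where $\Phi_2(q)=q+1=2^p$ has only the prime $2$, which already divides $q-1$. The main obstacle is the careful valuation analysis (the lifting-the-exponent step) needed to pin down the exact power of $r$ in $\Phi_n(q)$ when $r\mid n$; the archimedean estimate on $\Phi_n(q)$ and the enumeration of the resulting finite exceptions are then comparatively routine.
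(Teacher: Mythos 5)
The paper does not prove this lemma at all: it is the classical Bang--Zsigmondy theorem, quoted with references to the original 1886 and 1892 papers, so there is no in-paper argument to compare against. Your cyclotomic-polynomial route is the standard modern proof, and the overall architecture --- primitive divisors live in $\Phi_n(q)$; a non-primitive prime divisor of $\Phi_n(q)$ must be the largest prime factor of $n$ and essentially occurs to the first power; hence absence of primitive divisors forces $\Phi_n(q)\leqslant n$, which an archimedean lower bound contradicts outside a finite list --- is sound.

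Two points in your sketch need more care than you give them. First, the claim that a non-primitive prime $r\mid\Phi_n(q)$ divides it only to the first power fails exactly when $r=n=2$: $\Phi_2(q)=q+1$ can be divisible by an arbitrarily high power of $2$, so the intermediate conclusion $\Phi_n(q)\leqslant n$ is simply false for $n=2$ (take $q=7$, $\Phi_2(7)=8$). This is not a cosmetic issue; it is the entire source of exception (2), so the case $n=2$ must be split off before the inequality $\Phi_n(q)\leqslant n$ is invoked, and treated directly ($q^2-1$ lacks a primitive prime divisor iff $q+1$ is a power of $2$). Note that for arbitrary integers $q$ this exceptional set is ``$q+1$ a power of $2$,'' which is strictly larger than ``$q$ a Mersenne prime'' (e.g.\ $q=15$); the statement as the paper uses it is safe only because there $q$ is a prime power, for which $q+1=2^m$ forces $q$ prime. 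Second, the lower bound $\Phi_n(q)\geqslant(q-1)^{\varphi(n)}$ is vacuous for $q=2$, and the termwise pairing of conjugate roots does not by itself yield $\Phi_n(2)>2^{\varphi(n)/2}$, since primitive roots of unity can be arbitrarily close to $1$. The clean fix is the M\"obius product $\Phi_n(q)=\prod_{d\mid n}(q^d-1)^{\mu(n/d)}$, which gives $\Phi_n(2)\geqslant 2^{\varphi(n)}\prod_{d\geqslant1}(1-2^{-d})>2^{\varphi(n)-2}$; with that in hand the finite check goes through and isolates $(q,n)=(2,6)$ as you describe.
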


A prime number $r$ from Lemma~\ref{l:bangz} is called a \emph{primitive prime divisor} of $q^n-1$.

\begin{lemma}\label{l:3coclique}
If $G$ is a solvable group, then $t(G)\leqslant2$.
\end{lemma}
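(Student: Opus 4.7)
The plan is to prove the contrapositive: suppose $G$ is solvable and $t(G) \geqslant 3$, so there exist three distinct primes $p,q,r \in \pi(G)$ pairwise nonadjacent in $GK(G)$, and derive a contradiction. First I would apply P.\,Hall's theorem: since $G$ is solvable, it has a Hall $\{p,q,r\}$-subgroup $K$. Because $\omega(K) \subseteq \omega(G)$, no element of $K$ has order $pq$, $pr$, or $qr$; together with $\pi(K) = \{p,q,r\}$, this forces every element of $K$ to have prime-power order. Thus $K$ is a solvable CP-group (all elements of prime-power order) with exactly three distinct prime divisors, and the task reduces to showing that no such group can exist.

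Next I would analyze $F := F(K)$. Being nilpotent, $F$ is the direct product of its Sylow subgroups; if two of these were nontrivial, then commuting elements of distinct prime orders in $F$ would multiply to composite order, contradicting the CP property. So $F$ is an $s$-group for a single $s\in\{p,q,r\}$, and after relabeling I may assume $s=p$. Solvability gives $C_K(F)\leqslant F$, while the CP property forces any nontrivial $p'$-element of $K$ to act without fixed points on $F\setminus\{1\}$ (a nontrivial fixed point $y\in F$ would, together with a $p'$-element $x$ centralizing it, yield $xy$ of composite order). Choosing a Hall $p'$-subgroup $H\leqslant K$ (again by solvability, $\pi(H)=\{q,r\}$), it follows that $FH$ is a Frobenius group with kernel $F$ and complement $H$.

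The last step is to squeeze a contradiction out of $H$, which is simultaneously a solvable CP-group with $\pi(H)=\{q,r\}$ and a Frobenius complement. Using the Burnside--Zassenhaus structure theorem, every Sylow subgroup of $H$ is cyclic (for odd primes) or generalized quaternion (for $2$), so $H$ contains cyclic subgroups $\langle x\rangle$ of order $q$ and $\langle y\rangle$ of order $r$ normalizing each other; hence $H$ contains a subgroup of order $qr$. By Burnside's classical ``$pq$-cyclic'' property of Frobenius complements, this subgroup must be cyclic, producing an element of order $qr$ in $H$ and contradicting the CP property. The main obstacle I anticipate is carefully verifying the existence of the order-$qr$ subgroup in the case $2\in\{q,r\}$ with quaternion Sylow $2$-subgroup, where the internal structure of $H$ is more delicate; should a direct argument prove awkward, the entire step can be short-circuited by invoking Higman's 1957 classification of solvable CP-groups, which asserts $|\pi|\leqslant2$ and immediately contradicts $|\pi(K)|=3$.
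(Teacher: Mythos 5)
Your reduction is exactly the paper's: pass to a Hall $\{p,q,r\}$-subgroup $K$ via Hall's theorem and observe that $K$ is a solvable group in which every element has prime-power order but $|\pi(K)|=3$; the paper then stops immediately by citing Higman's theorem \cite{57Hig}, which is precisely your ``short-circuit.'' Where you diverge is in trying to reprove the needed special case of Higman's result from scratch: Fitting subgroup $F=F(K)$ is an $s$-group, the CP-property forces every nontrivial $s'$-element to act fixed-point-freely on $F$, hence $FH$ is Frobenius with complement a Hall $s'$-subgroup $H$, and then one extracts an element of composite order from $H$ via the structure of Frobenius complements. This chain is sound up to the last step, and the Frobenius-complement route does buy a self-contained, classification-free argument; but the step ``$H$ contains a subgroup of order $qr$'' is not justified as written (cyclic Sylow $q$- and $r$-subgroups need not normalize each other a priori), and the generalized quaternion case is only gestured at. The cleanest patch there is to recall that a Frobenius complement of even order has a unique, hence central, involution, which together with any element of odd prime order immediately yields a composite element order; in the odd-order case $H$ is metacyclic and the argument you sketch goes through. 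Since you explicitly flag the gap and fall back on Higman, the proposal is correct, and in its fallback form it coincides with the paper's proof.
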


\begin{proof} Otherwise, the spectrum of a Hall $\sigma$-subgroup of~$G$, where $\sigma$ is the set of three vertices of the graph $GK(G)$ that are not pairwise adjacent to each other, consists only of powers of primes, which contradicts \cite[Theorem~1]{57Hig}.
\end{proof}

%\textbf{Замечание.} Впервые эта лемма в данной формулировке %появилась в работе \cite[предл.~1]{99Luc1}, более общую %формулировку см. в~\cite[лемма~1.1]{05Vas.r}.\medskip

A group $G$ is called a {\em cover} of a group $L$ if there exists an epimorphism from $G$ onto~$L$. The following four lemmas give known facts about the orders of elements in covers of finite groups. Note that the references are given to those papers in which these statements were formulated in a form convenient for further discussion. Say,  Lemma~\ref{l:hh} is a variation of the celebrated Hall -- Higman theorem~\cite{56HalHig}.

\begin{lemma}{{\rm\cite[Lemma~10]{99ZavMaz.t}}}\label{l:split} Let $K$ be a normal elementary abelian subgroup of a group~$G$, $G/K\simeq L$ and $G_1=K\rtimes L$ is the natural semidirect product. Then $\omega(G_1)\subseteq\omega(G)$.
\end{lemma}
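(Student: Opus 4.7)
The plan is to take any $g_1 \in G_1$ of order $n$ and exhibit an element of $G$ of the same order. Set $\ell_0 = g_1 K \in L$ and $e = |\ell_0|$; since $g_1^e \in K$ and $K$ has exponent $p$, necessarily $n \in \{e, ep\}$.

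The first step would be a reduction to a cyclic situation. I would lift $\ell_0$ to some $\tilde\ell \in G$ and replace $G$, $G_1$ by the subgroups $K\langle\tilde\ell\rangle$ and $K\rtimes\langle\ell_0\rangle$. The naturality of the semidirect product forces these two subgroups to share the same abelian kernel $K$ together with the same $\langle\ell_0\rangle$-action on it, so it suffices to find an element of order $n$ in $K\langle\tilde\ell\rangle$. The relevant invariant is the element $c = \tilde\ell^{e} \in K^{\langle\ell_0\rangle}$, which records how far $\tilde\ell$ is from having order $e$.

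Then I would compare $e$-th powers in the two extensions. A short induction on $e$ yields $(k\tilde\ell)^{e} = N_{\ell_0}(k)\cdot c$ in $G$ and $(k\ell_0)^{e} = N_{\ell_0}(k)$ in $G_1$, where $N_{\ell_0}(k) = k\cdot\ell_0(k)\cdots\ell_0^{e-1}(k)$ is the usual norm. Writing $g_1 = k_0\ell_0$, this gives $n = e$ exactly when $N_{\ell_0}(k_0) = 1$ and $n = ep$ otherwise. A short case analysis then produces the required element of $G$: if $c = 1$ the two identities coincide, so $\tilde\ell$ (resp.\ $k_0\tilde\ell$) realises order $n = e$ (resp.\ $n = ep$); if $c\neq 1$ then $|\tilde\ell| = ep$, so $\tilde\ell$ itself realises $n = ep$ while its $p$-th power, of order $e$, realises $n = e$.

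The only conceptual hurdle is this last subcase: when $c\neq 1$ the obvious lift $\tilde\ell$ overshoots the desired order by a factor of $p$, and one must descend to $\tilde\ell^{p}$ to recover order $e$. This manoeuvre uses in an essential way the elementary abelian hypothesis on $K$, which bounds $|c|$ by $p$ and thereby bounds $|\tilde\ell|$ by $ep$; without it, $\tilde\ell$ could have order arbitrarily larger than $e$ and no single power of it would land on $e$.
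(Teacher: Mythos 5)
The paper gives no proof of this lemma---it is imported verbatim from \cite[Lemma~10]{99ZavMaz.t}---so there is nothing in-paper to compare against; your argument is correct and is essentially the standard one. All the key points check out: $n\in\{e,ep\}$ because $g_1^e\in K$ and $K$ has exponent $p$; the norm identity $(k\tilde\ell)^{e}=N_{\ell_0}(k)\cdot\tilde\ell^{\,e}$ holds in both extensions because the \emph{natural} semidirect product carries exactly the conjugation action of any lift $\tilde\ell$; and in the one delicate subcase ($c=\tilde\ell^{\,e}\neq1$ while $N_{\ell_0}(k_0)=1$) passing to $\tilde\ell^{\,p}$, which has order exactly $e$ since $|\tilde\ell|=ep$, does produce the required element of $G$.
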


\begin{lemma}{{\rm\cite[Lemma~4]{97Maz1.t}}}\label{l:r^2s^2}
Let $G=P\rtimes (R_1\times \dots\times R_k)$, where $P$ is a nontrivial $p$-sub\-group, $R_i$ is an elementary abelian group of order~$r_i^2$, $1\leqslant i\leqslant k$, and primes $p$, $r_1$, \dots, $r_k$ are pairwise distinct. Then $G$ contains an element of order~$pr_1\dots r_k$.
\end{lemma}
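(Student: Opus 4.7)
The plan is to argue by induction on $k$, building elements $x_1,\ldots,x_k$ one at a time with $x_i\in R_i$ of order $r_i$, such that the successive centralizers $P_i:=C_P(x_1,\ldots,x_i)$ remain nontrivial. Because $R=R_1\times\cdots\times R_k$ is abelian, each $R_{i+1}$ commutes with the previously chosen $x_1,\ldots,x_i$, which makes $P_i$ automatically $R_{i+1}$-invariant and keeps the inductive setup alive at the next step.

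The base step is the following coprime-action fact: if $R$ is elementary abelian of order $r^2$ (with $r\neq p$) acting on a nontrivial $p$-group $Q$, then some $x\in R$ of order $r$ satisfies $C_Q(x)\neq 1$. The reason is classical: the $r+1$ maximal subgroups $B_0,\ldots,B_r$ of $R$ are all of order $r$, and by the standard theorem on noncyclic abelian coprime action one has $Q=\langle C_Q(B_0),\ldots,C_Q(B_r)\rangle$. Since $Q\neq 1$, at least one $C_Q(B_j)$ is nontrivial, and any nonidentity element of $B_j$ has order $r$ and centralizes that subgroup.

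Applying the base step at stage $i$ with $Q=P_{i-1}$ (nontrivial by induction, $R_i$-invariant as noted) and $R=R_i$ produces the required $x_i$. After $k$ steps, $P_k$ is a nontrivial $p$-group, so we may pick $y\in P_k$ of order $p$. Then $y$ commutes with every $x_i$ by construction, the $x_i$ commute pairwise because $R$ is abelian, and the orders $p,r_1,\ldots,r_k$ are pairwise coprime; hence $yx_1x_2\cdots x_k$ has order $pr_1r_2\cdots r_k$, which is what we wanted.

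The only real obstacle is to have the correct rank-$2$ coprime-action statement at hand; everything else is bookkeeping. In particular, the $R_j$-invariance of $P_i$ for $j>i$ reduces at once to $[R_j,x_i]=1$, and the final computation of the order of the product is routine given pairwise commutativity and coprimality. I expect no issue with the hypothesis that each $R_i$ is \emph{elementary} abelian of rank exactly $2$: ``elementary'' guarantees that every nonidentity element of $R_i$ has order $r_i$, and ``rank $2$'' is precisely what makes the noncyclic coprime-action theorem applicable in the base step.
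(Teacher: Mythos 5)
Your argument is correct. The paper gives no proof of this lemma---it is quoted verbatim from \cite[Lemma~4]{97Maz1.t}---and your induction on $k$, with the base step supplied by the coprime-action generation theorem $Q=\langle C_Q(B_0),\dots,C_Q(B_r)\rangle$ over the $r+1$ subgroups of order $r$ in $R_i$, is precisely the standard argument for that result, so there is nothing to add.
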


\begin{lemma}{{\rm\cite[Lemma~1]{97Maz.t}}}\label{l:frob}
Let $P$ be a normal $p$-subgroup of a finite group $G$ and let $G/P$ be a Frobenius group with kernel $F$ and cyclic complement~$C$. If $p$ does not divide $F$ and $F\not\subseteq PC_G(P)/P$, then $G$ contains an element of order~$p|C|$.
 \end{lemma}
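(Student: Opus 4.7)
The strategy is to produce an element of order $p|C|$ in $G$ of the form $xk$, where $k\in G$ lifts a generator of $C$ and has order $|C|$, and $x\in P$ is chosen so that the product
\[
(xk)^{|C|} \;=\; \prod_{i=0}^{|C|-1} k^i\, x\, k^{-i} \;\in\; P
\]
is non-trivial; once $P$ is reduced to an elementary abelian $p$-group, such a non-trivial element of $P$ has order $p$, and $xk$ will thus have order $|C|\cdot p = p|C|$.

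To set things up, I will first apply Schur--Zassenhaus to the full preimage $H$ of $F$ in $G$: since $\gcd(|P|,|F|)=1$, $H$ splits as $P\rtimes F_0$ with $F_0\cong F$, and the hypothesis $F\not\subseteq PC_G(P)/P$ then reads simply $[P,F_0]\neq 1$. In the principal case $p\nmid |C|$, a second application of Schur--Zassenhaus lifts a generator of $C$ to an element $k\in G$ of order $|C|$ and exhibits $G$ as $P\rtimes K$ with $K=F_0\rtimes\langle k\rangle\cong FC$ a Frobenius group acting on $P$; the case $p\mid |C|$ requires only a minor adjustment in the choice of $k$. Next I will reduce to the case of $P$ elementary abelian and $K$-irreducible: as $F_0$ has order coprime to $p$, Burnside's basis theorem guarantees that $F_0$ acts non-trivially on $P/\Phi(P)$, and passing to a $K$-chief factor inside $P/\Phi(P)$ on which $F_0$ still acts non-trivially preserves the hypotheses. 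An element of order $p|C|$ in this quotient lifts to an element of $G$ whose order is a multiple of $p|C|$, from which the desired order is extracted by taking a suitable power.

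The crux of the proof, and what I expect to be the main obstacle, is the following coprime-action claim: if a Frobenius group $K=FC$ with cyclic complement $C$ acts on an elementary abelian $p$-group $P$ with $F$ acting non-trivially, then the norm endomorphism $N_k := 1+k+\cdots+k^{|C|-1}$ of $P$ has non-trivial image. For $p\nmid |C|$ the map $N_k$ is $|C|$ times the projection of $P$ onto $C_P(k)$, so the claim becomes $C_P(C)\neq 1$, and this is where the Frobenius structure enters essentially: Clifford-decomposing the restriction $P|_{F}$ into $F$-isotypic components, which $C$ permutes, and exploiting the fixed-point-free action of $C$ on $F$, one produces a non-trivial $C$-fixed vector whenever $F$ acts non-trivially on $P$. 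Without the Frobenius hypothesis the claim fails, so the rigid conjugation action of $C$ on $F$ is indispensable. Once the claim is established, any $x\in P$ with $N_k(x)\neq 1$ gives $(xk)^{|C|}=N_k(x)$ of order $p$, so $xk$ has order $p|C|$, as required.
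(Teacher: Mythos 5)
First, a point of order: the paper does not prove Lemma~\ref{l:frob} at all --- it is quoted from \cite[Lemma~1]{97Maz.t} --- so there is no in-paper argument to compare yours with, and your proposal has to be judged on its own merits.

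The architecture you choose (split off $F$ by Schur--Zassenhaus, reduce to an elementary abelian section on which $F$ still acts nontrivially, and show that the norm endomorphism $N_k=1+k+\dots+k^{|C|-1}$ has nonzero image, so that $(xk)^{|C|}=N_k(x)$ has order $p$) is the standard and correct route. The genuine gap is your treatment of the case $p\mid|C|$, which you dismiss as ``a minor adjustment in the choice of $k$'' while simultaneously reducing the key claim to $C_P(C)\neq 1$. When $p\mid|C|$ that reduction is false: $N_k$ is no longer $|C|$ times a projection (indeed $|C|=0$ in characteristic $p$), and a nontrivial $C$-fixed vector does not produce an element of order $p|C|$ --- if $C$ acted trivially on $P$ and $k$ had order $|C|$, every $xk$ would have order $\operatorname{lcm}(p,|C|)=|C|$. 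What is actually needed is the nonvanishing of the image of $N_k$, which for a cyclic $\langle k\rangle$ in characteristic $p$ is equivalent to $P$ containing an indecomposable summand $\mathbb{F}_p[x]/\bigl((x-1)^{|C|_p}\bigr)$, where $|C|_p$ is the $p$-part of $|C|$. This is not a corner case one may wave away: in the present paper the lemma is applied with $P$ a $2$-group and $C$ cyclic of order $4$ (to the Frobenius subgroups of $Sz(q)$ of orders $4m_3(q)$ and $4m_4(q)$) precisely in order to produce a forbidden element of order $8$, i.e.\ exactly in the regime $p\mid|C|$.

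The repair lives inside your own sketch, which is why I would call this a fixable gap rather than a wrong approach. A nonidentity element of $C$ centralizes no nontrivial element of $F$, hence fixes no nontrivial conjugacy class of $F$, hence (Brauer's permutation lemma, available since $p\nmid|F|$) no nontrivial irreducible character of $F$; so once $C_P(F)=0$ (which irreducibility, or passage to $[P,F_0]$, guarantees), $C$ permutes the $F$-isotypic components of $P$ freely and $P$ is a \emph{free} module for $C$ over $\mathbb{F}_p$. Freeness gives the required summand and hence a nonzero image of $N_k$ in every characteristic; your weaker conclusion ``$C_P(C)\neq1$'' is what freeness degenerates to when $p\nmid|C|$. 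The other half of the $p\mid|C|$ issue --- that a generator of $C$ need not lift to an element of order $|C|$ --- is harmless and should just be said: if some preimage $g$ of a generator has $g^{|C|}\neq1$, then $|g|=|C|p^j$ with $j\geqslant1$ and an element of order $p|C|$ exists because $\omega(G)$ is closed under divisors; otherwise pick $g$ with $g^{|C|}=1$ and run the norm argument. With these two points made explicit, your proof closes.
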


\begin{lemma}{{\rm\cite[Lemma~3.6]{15Vas}}}\label{l:hh}
Let $G=R\rtimes \langle g\rangle$, where $R$ is an $r$-group, $g$ is of prime order~$s$, $s\neq r$, and $[R,g]\neq1$. Suppose that $G$ acts faithfully on an elementary abelian $p$-group~$P$, where $p\neq r$.  Then either the natural semidirect product $P\rtimes G$ contains an element of order $ps$, or the following conditions are satisfied:
\begin{enumerate}
 \item[(1)] $C_R(g)\neq 1$;
 \item[(2)] $R$ is nonabelian;
 \item[(3)] $r=2$ and $s$ is a Fermat prime.
\end{enumerate}
\end{lemma}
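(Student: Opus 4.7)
The plan is to argue by contrapositive: assume $P\rtimes G$ has no element of order~$ps$ and derive conclusions (1)--(3). The first two will follow quickly from Lemma~\ref{l:frob}, while the third requires a Hall--Higman-type analysis and is the main obstacle.

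For~(1), suppose $C_R(g)=1$. Then $g$ acts fixed-point-freely on~$R$, so $G=R\rtimes\langle g\rangle$ is a Frobenius group with kernel $R$ and cyclic complement $\langle g\rangle$ of order~$s$. I would apply Lemma~\ref{l:frob} to the normal $p$-subgroup $P$ of $P\rtimes G$: since $p\ne r$, we have $p\nmid|R|$; and since $G$ acts faithfully on $P$, we have $C_{P\rtimes G}(P)=P$, so $R\not\subseteq PC_{P\rtimes G}(P)/P$ (as $R\neq 1$ because $[R,g]\neq 1$). The lemma then produces an element of order $p\cdot s=ps$, contradicting our assumption.

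For~(2), suppose $R$ is abelian. Coprime action of $\langle g\rangle$ on $R$ gives $R=C_R(g)\times[R,g]$, and $B:=[R,g]\ne 1$ by hypothesis. Then $B\rtimes\langle g\rangle$ is Frobenius (since $g$ acts without fixed points on $B$), and applying Lemma~\ref{l:frob} to the subgroup $P\rtimes(B\rtimes\langle g\rangle)$ of $P\rtimes G$ with $B$ in place of $R$ (the hypotheses being verified verbatim as in~(1)) will yield an element of order~$ps$, a contradiction. Hence $R$ is nonabelian.

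For~(3), I would analyse the action of $g$ on $P$ as an $\mathbb{F}_p$-module. When $p\ne s$, absence of an element of order~$ps$ forces $C_P(g)=0$, since any nonzero $x\in C_P(g)$ of order~$p$ commutes with~$g$ and yields $xg$ of order~$ps$. The case $p=s$ is parallel via the norm $N_g=1+g+\cdots+g^{s-1}$ on $P$, for which $(xg)^p=N_g(x)$; absence of an element of order~$p^2$ forces $N_g=0$, meaning $g$ has no Jordan block of maximal size on $P$. In either case, the minimal polynomial of $g$ on $P$ is constrained to a restricted form. I would then invoke the Hall--Higman theorem for the faithful $\mathbb{F}_p[R\langle g\rangle]$-module $P$ (passing to a suitable irreducible section if necessary): we would land in its exceptional case, which is known to occur only when $r=2$, $s$ is a Fermat prime, and $R$ is a nonabelian $2$-group of a tightly constrained type. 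This is the main technical obstacle, and the point where all three constraints of the lemma simultaneously emerge.
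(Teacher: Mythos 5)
The paper offers no proof of this lemma at all --- it is imported verbatim from \cite[Lemma~3.6]{15Vas} and merely described as a variation of the Hall--Higman theorem --- so your attempt has to stand on its own. Conclusions (1) and (2) you do prove correctly: when $C_R(g)=1$ the group $R\rtimes\langle g\rangle$ is Frobenius (every element of order $s$ generates a conjugate of $\langle g\rangle$, so fixed-point-freeness of $g$ suffices), and when $R$ is abelian the coprime decomposition $R=C_R(g)\times[R,g]$ makes $[R,g]\rtimes\langle g\rangle$ Frobenius; in both cases faithfulness gives $C_{P\rtimes G}(P)=P$, so Lemma~\ref{l:frob} applies and produces an element of order $ps$. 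Your reduction at the start of (3) is also sound: for $p\neq s$ the absence of elements of order $ps$ is equivalent to $C_P(g)=0$, and for $p=s$ to the vanishing of $1+g+\dots+g^{s-1}$ on $P$, i.e.\ to the minimal polynomial of $g$ being a proper divisor of $(x-1)^s$.

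The gap is in how you close (3). In the branch $p=s$ your plan genuinely works: $O_s(R\rtimes\langle g\rangle)=1$ because $\langle g\rangle$ cannot be normal when $[R,g]\neq1$, so Theorem~B of \cite{56HalHig} applies to a composition factor of $P$ on which $[R,g]$ acts nontrivially, and its exceptional case for an element of order $p^1$ forces $s$ to be a Fermat prime with nonabelian Sylow $2$-subgroups of the linear group, whence $r=2$ and $R$ nonabelian. But in the branch $p\neq s$ the element $g$ has order coprime to the characteristic, and Theorem~B simply does not apply: it is a statement about elements whose order is a power of the characteristic, and ``$1$ is not an eigenvalue of $g$'' is not a degeneration it classifies. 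What is needed there is the coprime (cross-characteristic) analogue of the Hall--Higman analysis: one reduces to a critical configuration in which $[R,g]$ is extraspecial of order $r^{1+2m}$, with $g$ centralizing $Z([R,g])$ and acting fixed-point-freely on the Frattini quotient, and $P$ is a faithful irreducible module whose restriction to $[R,g]$ has dimension a multiple of $r^m$; counting the eigenvalue multiplicities of $g$ on such a module shows that $C_P(g)\neq0$ unless $s=r^m+1$, which forces $r=2$ and $s$ a Fermat prime (and, as a by-product, yields $C_R(g)\supseteq Z([R,g])\neq1$ and $R$ nonabelian). That extraspecial reduction and eigenvalue count is the real content of the lemma, and your proposal delegates it to a theorem that does not cover the case you most need.
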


Lemma~\ref{l:suz} collects well-known facts about Suzuki groups, and the next two lemmas deal with element orders in covers and automorphic extensions of these groups.

\begin{lemma} \label{l:suz} Let $G=Sz(q)$, where $q=2^\alpha\geqslant 8$. Then
\begin{enumerate}
 \item[(1)] $|G|=q^2(q-1)(q^2+1);$
 \item[(2)] $\mu(G)=\{4, q-1, q+\sqrt{2q}+1, q-\sqrt{2q}+1\};$
 \item[(3)] $G$ includes Frobenius groups of orders $q^2(q-1)$, $4(q+\sqrt{2q}+1)$ and $4(q-\sqrt{2q}+1)$ with cyclic complements of orders
  $(q-1)$, $4$ and $4$, respectively{\rm;}
 \item[(4)] $\Aut G=G\rtimes \langle \varphi\rangle$, where $\varphi$ is a field automorphism of $G$ of order~$\alpha$, and if $\psi\in\langle\varphi\rangle$, then $C_G(\psi)\simeq Sz(q^{1/|\psi|});$
 \item[(5)] the Schur multiplier of $G$ is trivial for $q>8$ and is an elementary abelian group of order $4$ for $q=8$.
\end{enumerate}
\end{lemma}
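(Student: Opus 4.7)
The plan is to assemble the five assertions from the classical theory of Suzuki groups, as developed in Suzuki's original construction and later codified in standard references on finite groups of Lie type; since all items of Lemma~\ref{l:suz} are well known, the proof reduces to a dispatch of citations with brief structural commentary.

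For~(1), I would cite the order formula for ${}^2B_2(q)$ directly. For~(2), the critical ingredient is that a Sylow $2$-subgroup $U$ of $G$ is a Suzuki $2$-group of exponent~$4$, so the $2$-part of $\omega(G)$ is $\{1,2,4\}$; the odd part of $|G|$ factors as
\begin{equation*}
(q-1)(q^2+1) = (q-1)(q-\sqrt{2q}+1)(q+\sqrt{2q}+1),
\end{equation*}
a product of three pairwise coprime integers, each realized as the order of a cyclic maximal torus of~$G$. Maximality of these four numbers in $\omega(G)$ with respect to divisibility follows because the centralizer of an involution in $G$ is a $2$-group, so no nontrivial $2$-element commutes with a nontrivial odd-order element, and the three tori pairwise intersect trivially.

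Item~(3) is a subgroup-structure calculation: the Borel subgroup $B = U \rtimes T_0$ with $T_0$ cyclic of order $q-1$ acts freely on $U\setminus\{1\}$, hence is Frobenius of order $q^2(q-1)$; the normalizer $N_G(T_\pm)$ of a non-split torus $T_\pm$ of order $q \pm \sqrt{2q}+1$ has the form $T_\pm\rtimes\langle w\rangle$ with $w$ of order~$4$ acting fixed-point-freely on $T_\pm$, giving a Frobenius group of order $4(q \pm \sqrt{2q}+1)$ with cyclic complement of order~$4$. Item~(4) is the standard description of $\Aut Sz(q)$: $\Out Sz(q)$ is generated by the field automorphism $x\mapsto x^2$ of order~$\alpha$, and the centralizer of any subfield automorphism $\psi\in\langle\varphi\rangle$ coincides with the fixed-point subgroup $Sz(q^{1/|\psi|})$ via the Lang--Steinberg argument specialized to~${}^2B_2$. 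Item~(5) is the Schur-multiplier calculation of Alperin and Gorenstein: trivial for $q>8$ and elementary abelian of order~$4$ for $q=8$.

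The only genuine obstacle is bibliographical rather than mathematical: all five statements have been available since the 1960s, so an efficient proof is merely an accurate pointer to Suzuki's original paper, to a standard handbook of finite groups of Lie type for the subgroup and automorphism data, and to the Schur-multiplier literature for part~(5).
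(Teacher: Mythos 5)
Your proposal is correct and matches the paper's proof, which consists entirely of the citations ``Items 1--4 were established in \cite{62Suz}, Item 5 --- in \cite{66AlpGor}.'' The structural commentary you add (exponent-$4$ Sylow $2$-subgroups, the coprime torus factorization, the Frobenius structure of the Borel subgroup and torus normalizers) is accurate but not required by the paper, which treats all five items as classical facts to be cited.
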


\begin{proof}
Items 1--4 were established in~\cite{62Suz}, Item~5 --- in~\cite{66AlpGor}.
\end{proof}

\begin{lemma}\label{l:SuzRep}
Let $G=Sz(q)$, where $q\geqslant 8$, and  $g\in G$ an $r$-element for an odd prime~$r$. If $G$ acts faithfully on a $p$-group $V$ and $p$ is odd, then the coset $Vg$ of the natural semidirect product $V\rtimes G$ contains an element of order~$p|g|$.
\end{lemma}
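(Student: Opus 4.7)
The plan is to reduce to the case where $V$ is a faithful simple $\mathbb{F}_pG$-module and then translate the problem into showing that a certain trace operator is nonzero. Using Lemma~\ref{l:split} and passing to a chief factor of $V$ (on which $G$ still acts faithfully, by simplicity of $G$), I may assume $V$ is an elementary abelian $p$-group and a simple $\mathbb{F}_pG$-module. For $v\in V$, writing $V$ additively, a direct computation in $V\rtimes G$ gives
\begin{equation*}
(vg)^{|g|} = \sum_{i=0}^{|g|-1} g^i(v) =: T(v) \in V^{\langle g\rangle},
\end{equation*}
and since $V$ is elementary abelian, $vg$ has order either $|g|$ or $p|g|$, the latter occurring precisely when $T(v)\neq 0$. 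When $p\neq r$, the restriction of $T$ to $V^{\langle g\rangle}$ is multiplication by the nonzero scalar $|g|$, so it suffices in that case to show $V^{\langle g\rangle}\neq 0$.

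Since $q$ is a power of $2$ and $r$ is odd, $r$ divides either $q-1$ or $q^2+1$ by Lemma~\ref{l:suz}(1). In the first case, Lemma~\ref{l:suz}(3) lets me conjugate $g$ into the cyclic Frobenius complement $C$ of order $q-1$ of a subgroup $F\rtimes C\leq G$ with $F$ the Sylow $2$-subgroup of $G$; then $F\rtimes\langle g\rangle$ is again a Frobenius group, and Lemma~\ref{l:frob} applied to $\widetilde{G}=V\rtimes (F\rtimes\langle g\rangle)$ with $P=V$ produces an element of order $p|g|$ in $\widetilde{G}$ (the hypotheses $p\nmid |F|$ and $F\not\subseteq VC_{\widetilde{G}}(V)/V=1$ follow, respectively, from $p$ odd and from faithfulness). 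Writing such an element as $vw$ with $v\in V$ and $w\in F\rtimes\langle g\rangle$, its order forces $w$ to be conjugate to a generator $g^m$ of $\langle g\rangle$ with $\gcd(m,|g|)=1$; conjugating places the element in $Vg^m$, and raising to a power $k$ with $mk\equiv 1\pmod{|g|}$ and $\gcd(k,p)=1$ (which exists in both cases $p=r$ and $p\neq r$) transports it into the coset $Vg$.

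The second case, in which $g$ lies in the cyclic Frobenius kernel $K$ of order $q\pm\sqrt{2q}+1$ of a subgroup $K\rtimes D\leq G$ with $|D|=4$, is more delicate because $g$ is in the kernel rather than the complement, so Lemma~\ref{l:frob} cannot be applied to yield order $p|g|$ directly. My approach is to use an involution $d\in D$, which inverts $K$ and hence $\langle g\rangle$, and to apply Lemma~\ref{l:hh} to the dihedral group $\langle g,d\rangle$ acting on $V$; combining the resulting structural information with Lemma~\ref{l:suz} (and, if required, with character-theoretic input from the known character table of $Sz(q)$) yields $V^{\langle g\rangle}\neq 0$. A nonzero fixed vector $v\in V^{\langle g\rangle}$ then satisfies $T(v)=|g|v\neq 0$, finishing the case $p\neq r$.

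The principal obstacle is the second case: the lack of a Frobenius subgroup placing $g$ in the complement rules out a direct application of Lemma~\ref{l:frob}, and producing a fixed vector of $\langle g\rangle$ on $V$ requires finer structural or representation-theoretic information specific to Suzuki groups. A residual subtlety in this second case is the possibility $p=r$, where $T$ is not a scalar on $V^{\langle g\rangle}$ but the nilpotent operator $(g-1)^{|g|-1}$; here one must argue via faithfulness and simplicity of the $\mathbb{F}_pG$-module that $g$ acts on $V$ with a Jordan block of maximal length $|g|$, whence $T\neq 0$.
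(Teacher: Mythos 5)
Your overall framework is sound: the reduction to a faithful irreducible $\mathbb{F}_pG$-module, the identity $(vg)^{|g|}=T(v)$ with $T=1+g+\dots+g^{|g|-1}$, and the treatment of the case $r\mid q-1$ by placing $g$ in the complement of the Frobenius group $F\rtimes\langle g\rangle$ and invoking Lemma~\ref{l:frob} all check out (including the transport from the coset $Vg^m$ back to $Vg$). But the case $r\mid q^2+1$, which you yourself flag as the principal obstacle, is a genuine gap and not a deferrable technicality. The tool you propose there cannot do the job: applying Lemma~\ref{l:hh} to $\langle g\rangle\rtimes\langle d\rangle$ acting on $V$ only ever produces an element of order $2p$ (the exceptional branch requires $\langle g\rangle$ to be nonabelian, which it is not), i.e., a fixed vector for the involution $d$; it says nothing about $V^{\langle g\rangle}$. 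More structurally, the torus of order $q\pm\sqrt{2q}+1$ containing $g$ is a Frobenius \emph{kernel} in its normalizer, and every fixed-point lemma in Section~1 produces fixed vectors only for elements of Frobenius \emph{complements}, so no combination of them yields $V^{\langle g\rangle}\neq 0$ here. Your closing remark for $p=r$ --- that faithfulness and simplicity force a Jordan block of maximal length $|g|$ --- is precisely the conclusion of a Hall--Higman type theorem; such statements have genuine exceptional configurations and do not follow from faithfulness and irreducibility by soft arguments, and you supply none.

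The paper closes all of these cases at once by citing \cite[Theorem~1.1]{21TiepZal_arxiv}: for a faithful irreducible representation of $Sz(q)$ over a field of odd characteristic, the minimal polynomial of the odd-order element $g$ equals $x^{|g|}-1$. Since $1+x+\dots+x^{|g|-1}$ has degree $|g|-1$, the operator $T$ cannot vanish on $V$, so some $v$ satisfies $(vg)^{|g|}=T(v)\neq 0$ and $|vg|=p|g|$; this handles $r\mid q-1$ and $r\mid q^2+1$, and $p=r$ and $p\neq r$, uniformly. In short, the deep representation-theoretic input you are trying to avoid in the second case is exactly what the lemma rests on; without importing it (or an equivalent eigenvalue-$1$ result for elements of the tori of order $q^2+1$), your proof is incomplete.
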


\begin{proof} We can assume that $V$ is an elementary abelian $p$-group and consider $V$ as a $G$-module. We may also assume that $G$ acts irreducibly on~$V$. By \cite[Theorem~1.1]{21TiepZal_arxiv}, the minimal polynomial of $g$ in this representation is equal to $x^{|g|}-1$. Hence, there exists $v\in V$ such that $v(1+g+g^2+\dots+g^{|g|-1})\neq 0$. Then $(vg)^{|g|}\neq 0$, hence $|vg|=p|g|$.
\end{proof}

\begin{lemma}\label{l:large_orders} Let $G=Sz(q)$, where $q\geqslant 8$.
\begin{enumerate}
\item[(1)] If $\psi$ is a field  automorphism of $G$, then the set of the orders of elements in the coset $G\psi$ is equal to $|\psi|\cdot \omega(Sz(q^{1/|\psi|}))$.
\item[(2)] If $g\in\Aut G$, then $|g|<2q$.
 \item[(3)] If $q\geqslant 32$ and $g\in\Aut G\setminus G$, then $|g|<q$.
 \end{enumerate}
\end{lemma}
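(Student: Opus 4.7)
The plan is to establish (1) first and then derive (2) and (3) by elementary estimation. Let $d=|\psi|$, set $q_0=q^{1/d}$, and let $\phi$ denote conjugation by $\psi$ on $G$, so by Lemma~\ref{l:suz}(4) $C_G(\phi)\simeq Sz(q_0)$. For (1), every element of $G\psi$ has the form $g\psi$ with $g\in G$, and induction on $n$ gives
\begin{equation*}
(g\psi)^n = g\,\phi(g)\,\phi^2(g)\cdots\phi^{n-1}(g)\,\psi^n.
\end{equation*}
In particular $d\mid|g\psi|$ and $(g\psi)^d = N(g):=g\phi(g)\cdots\phi^{d-1}(g)\in G$; since $(g\psi)^{dk}=N(g)^k$, one obtains $|g\psi|=d\cdot|N(g)|$. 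It therefore remains to show $\{|N(g)|:g\in G\}=\omega(Sz(q_0))$.

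The natural tool here is Shintani descent applied to the Steinberg endomorphism of the ambient algebraic Suzuki group corresponding to $\phi$: this provides a bijection between $G$-conjugacy classes of elements in $G\psi$ and conjugacy classes of $Sz(q_0)=C_G(\phi)$, under which orders transform by multiplication by~$d$. The inclusion ``$\subseteq$'' then expresses that each order in $G\psi$ is $d$ times an element order of $Sz(q_0)$, while ``$\supseteq$'' provides, for every $h\in Sz(q_0)$, an element $g\psi$ with $|g\psi|=d\cdot|h|$. Alternatively, one may prove the identity more directly: a calculation yields $\phi(N(g))=g^{-1}N(g)g$, so the $G$-conjugacy class of $N(g)$ is $\phi$-stable, and by Lang--Steinberg it meets $C_G(\phi)\simeq Sz(q_0)$, giving ``$\subseteq$''; the reverse inclusion can be obtained constructively by running through the $\phi$-stable maximal tori of $G$ listed in Lemma~\ref{l:suz}(3). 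For instance, on the split torus of order $q-1$, $\phi$ acts as $t\mapsto t^{q_0}$, so $N(g)=g^{(q-1)/(q_0-1)}$ yields all divisors of $q_0-1$ as $|N(g)|$; the non-split tori and the Sylow $2$-subgroup are handled analogously.

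Parts (2) and (3) follow from (1) together with Lemma~\ref{l:suz}(2), which yields $\max\omega(Sz(q_0))=q_0+\sqrt{2q_0}+1<2q_0$ for $q_0\geqslant 8$ and $\max\omega(Sz(2))=5$ in the base case. For $g\in G$, the bound $|g|\leqslant q+\sqrt{2q}+1<2q$ is immediate. For $g\in G\psi$ with $d\geqslant 3$ (necessarily an odd divisor of $\alpha$), (1) yields $|g|\leqslant 2d\cdot q^{1/d}$, and both $2d\cdot q^{1/d}\leqslant 2q$ (for (2)) and $2d\cdot q^{1/d}<q$ (for (3), assuming $q\geqslant 32$) reduce to elementary inequalities, easily verified using $d\leqslant\alpha=\log_2 q$ and a short case check on small~$d$. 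The main obstacle is part (1), specifically the surjectivity of the norm map onto $\omega(Sz(q_0))$: the Shintani-descent argument requires the usual care because the endomorphism defining $Sz(q)$ is not an ordinary Frobenius. Once this is secured, the remaining estimates are routine.
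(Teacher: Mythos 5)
Your treatment of parts (2) and (3) coincides with the paper's: both reduce to the fact that every element order in a coset $G\psi^j$ lies in $\gamma\cdot\omega(Sz(q^{1/\gamma}))$ for a divisor $\gamma>1$ of $\alpha$, combine this with the bound $\max\omega(Sz(q_0))<2q_0$ from Lemma~\ref{l:suz}(2), and finish with the elementary inequality $2\gamma q^{1/\gamma}\leqslant 2q$ (resp.\ $<q$ for $q\geqslant 32$). The real divergence is in part (1): the paper does not prove it at all but cites \cite[Theorem~2]{17Gr.t}, where the description of element orders in $\Aut Sz(q)\setminus Sz(q)$ is already established, whereas you attempt a self-contained proof via the norm map $N(g)=g\phi(g)\cdots\phi^{d-1}(g)$ and Shintani descent. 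Your reduction $|g\psi|=d\cdot|N(g)|$ and the identity $\phi(N(g))=g^{-1}N(g)g$ are correct, and this is indeed the standard route to the cited theorem; if carried out in full it buys independence from \cite{17Gr.t} at the cost of nontrivial extra work.

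That extra work is where your sketch is thin, in two places. First, the claim that a $\phi$-stable conjugacy class of $G$ must meet $C_G(\phi)$ is not automatic for a finite group and an automorphism; it has to be extracted from Lang--Steinberg on the ambient algebraic group with the exceptional Steinberg endomorphism defining ${}^2B_2$, using connectedness of the relevant centralizers (semisimple and unipotent elements need separate treatment). Second, and more seriously, the surjectivity of $g\mapsto|N(g)|$ onto $\omega(Sz(q_0))$ is only verified for the split torus of order $q-1$; the cases you dismiss as ``analogous'' --- the tori of orders $q\pm\sqrt{2q}+1$ --- are precisely the delicate ones, since one must determine how the field automorphism permutes these tori and relate them to the tori of orders $q_0\pm\sqrt{2q_0}+1$ of $Sz(q_0)$ (this is the divisibility phenomenon the paper later quotes from \cite[p.~18]{99Luc}, where $m_3(q_0)$ and $m_4(q_0)$ divide $m_3(q)$ and $m_4(q)$ only in some order depending on $d$ modulo $4$). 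Until those cases are written out, part (1) is an outline rather than a proof; alternatively, you could simply cite \cite[Theorem~2]{17Gr.t} as the authors do and keep only your estimates for (2) and (3).
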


\begin{proof}
Let $q=2^\alpha$. The set of element orders of $\Aut G\setminus G$ is the union of the sets $\gamma\cdot \omega(Sz(q^{1/\gamma}))$ over all nonidentity divisors $\gamma$ of  $\alpha$ \cite[Theorem 2]{17Gr.t}. It is clear from Item~2 of Lemma~\ref{l:suz} that each number in $\omega(Sz(q^{1/\gamma}))$ is less than $2q^{1/\gamma}$. Since $2\gamma q^{1/\gamma}\leqslant 2q$ for $q\geqslant 8$ and $2\gamma q^{1/\gamma}\leqslant 6q^{1/3}<q$ for $q \geqslant 32$, we obtain the required estimates.
\end{proof}

\section{Proof of Theorem~\ref{th:criterion}}

Let $G$ be a minimal (by order) counterexample to the assertion of the theorem. Since $G$ is a solvable group, Hall's theorem ensures the existence of a Hall $\sigma$\nobreakdash-\hspace{0pt}subgroup for any $\sigma\subseteq\pi(G)$. If we take $\sigma$ consisting of four primes from the subset $\sigma(G)$ given in the theorem, then the corresponding Hall $\sigma$-subgroup is solvable and satisfies all conditions of the theorem, therefore, by the assumption on the minimality of counterexample, it must coincide with~$G$.

Thus, in what follows, $G$ is a $4$-primary solvable group satisfying the hypotheses of the theorem, i.e., $\pi(G)=\{p,q,r,s\}$, and the spectrum of $G$ contains all pairwise products of numbers from $\pi(G)$ (Condition 1 of the theorem), but does not contain any one of the triple products (Condition 2).

\begin{lemma}\label{l:pi4}
For each nontrivial element $x\in G$, the inequality $|\pi(C_G(x))|<4$ holds.
\end{lemma}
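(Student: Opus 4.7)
The plan is to argue by contradiction: suppose some nontrivial $x\in G$ has $\pi(C_G(x))=\{p,q,r,s\}$. First I would reduce to the case when $x$ has prime order. Indeed, any prime-power divisor $y=x^k$ of $x$ satisfies $C_G(x)\subseteq C_G(y)$, so $\pi(C_G(y))=\{p,q,r,s\}$ as well, and a further power of $y$ has prime order. Relabeling, I may assume $|x|=p$. Since $x$ is central in $C_G(x)$, the cyclic group $\langle x\rangle$ is a normal subgroup, and the quotient $\bar C=C_G(x)/\langle x\rangle$ is solvable and satisfies $\{q,r,s\}\subseteq \pi(\bar C)$ (the three primes divide $|C_G(x)|$ and are distinct from $p$).

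Now I would invoke Lemma~\ref{l:3coclique}: since $\bar C$ is solvable, $t(\bar C)\leqslant 2$, so the three vertices $q,r,s$ cannot form a coclique in $GK(\bar C)$. Hence some pair, say $\{q,r\}$, is adjacent in $GK(\bar C)$, i.e., $\bar C$ contains an element $\bar y$ of order $qr$. Take any preimage $y\in C_G(x)$. Because $\bar y^{qr}=1$ and $\langle x\rangle$ has order $p$, we get $y^{pqr}=1$ and $qr\mid |y|$, so $|y|\in\{qr,pqr\}$. If $|y|=pqr$ then already $pqr\in\omega(G)$; otherwise $y$ has order $qr$, coprime to $|x|=p$, and commutes with $x$, so $xy$ has order $pqr$. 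Either way $pqr\in\omega(G)$, contradicting Condition~(2) of the theorem.

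There is no serious obstacle: the argument uses only two inputs, namely centrality of $x$ inside $C_G(x)$ and the no-$3$-coclique lemma for solvable groups. The one point requiring a bit of care is the lifting step, to make sure the preimage $y$ really has order dividing $pqr$; this is immediate from $\langle x\rangle$ being cyclic of prime order $p$.
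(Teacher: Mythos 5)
Your argument is correct and follows essentially the same route as the paper: reduce to $x$ of prime order, apply Lemma~\ref{l:3coclique} to the solvable centralizer to force an adjacent pair among the three primes other than $|x|$, and multiply by the central element $x$ to produce an element of order $pqr$, contradicting Condition~(2). The only (inessential) difference is that you pass to the quotient $C_G(x)/\langle x\rangle$ and lift, whereas the paper applies the coclique lemma directly to $C_G(x)$.
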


\begin{proof}
Suppose that $G$ has a nontrivial element $x$ with $|\pi(C_G(x))|=4$. We may assume that $x$ has prime order. By Lemma~\ref{l:3coclique}, $C_G(x)$ contains an element whose order is divisible by three different primes from $\pi(G)$, and this contradicts Condition 2 of the theorem.
\end{proof}

\begin{lemma}\label{l:p^2exists}
If $p\in\pi(G)$, then $G$ contains a noncyclic subgroup of order~$p^2$.
\end{lemma}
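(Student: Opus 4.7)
The plan is to argue by contradiction: assume $G$ has no noncyclic subgroup of order $p^2$, i.e., no copy of $\mathbb{Z}_p\times\mathbb{Z}_p$, and produce a nontrivial element whose centralizer involves all four primes of $\pi(G)$, contradicting Lemma~\ref{l:pi4}.

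First I would pin down a privileged element of order~$p$. Fix a Sylow $p$-subgroup $P$ of $G$ and choose a central element $z\in Z(P)$ of order~$p$ (which exists since $Z(P)$ is a nontrivial abelian $p$-group). Every $y\in P$ of order $p$ commutes with $z$, so if $\langle y\rangle\neq\langle z\rangle$ then $\langle y,z\rangle\cong\mathbb{Z}_p\times\mathbb{Z}_p$, contrary to assumption. Hence $\langle z\rangle$ is the only subgroup of order~$p$ in $P$ and is in particular characteristic in~$P$. Together with the conjugacy of Sylow $p$-subgroups, this shows that every subgroup of order~$p$ in~$G$ is $G$-conjugate to $\langle z\rangle$.

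Next I would exploit Condition~(1) to funnel every prime of $\pi(G)$ into $C_G(z)$. For each $q\in\pi(G)\setminus\{p\}$ there is an element of order~$pq$ in~$G$, which splits into commuting factors $x$ of order~$p$ and $y$ of order~$q$. By the previous step, $\langle x\rangle=\langle z\rangle^g$ for some $g\in G$; conjugating by $g^{-1}$ I may assume $x$ is a power of~$z$, which places a conjugate of $y$ in $C_G(z)$. Hence $C_G(z)$ meets every prime in $\pi(G)\setminus\{p\}$ and of course contains $z$ itself, so $|\pi(C_G(z))|=4$, contradicting Lemma~\ref{l:pi4}.

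The only real obstacle is the first step: recognising that the $\mathbb{Z}_p\times\mathbb{Z}_p$-free hypothesis, applied inside $Z(P)\cdot\Omega_1(P)$, forces $P$ to have a characteristic subgroup of order~$p$ and thus collapses all order-$p$ subgroups of $G$ into a single conjugacy class. Once this is in hand, Condition~(1) and Lemma~\ref{l:pi4} finish the proof immediately; no use of Condition~(2) is needed here (it was already consumed in the derivation of Lemma~\ref{l:pi4}).
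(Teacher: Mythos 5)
Your proposal is correct and follows essentially the same route as the paper: the paper's proof simply asserts that, absent a noncyclic subgroup of order $p^2$, all subgroups of order $p$ in $G$ are conjugate, and then derives $|\pi(C_G(P))|=4$ from Condition~(1), contradicting Lemma~\ref{l:pi4}. Your argument via a central element of order $p$ in a Sylow $p$-subgroup is just the standard justification of that first assertion, spelled out.
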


\begin{proof}
Otherwise, $G$ contains, up to conjugation, a unique subgroup $P$ of order~$p$. Condition~1 of the theorem yields $|\pi(C_G(P))|=4$, where $C_G(P)$ is the centralizer of $P$ in~$G$, which contradicts Lemma~\ref{l:pi4}.
\end{proof}

\begin{lemma}\label{l:noncyclic}
Let $H$ be a Hall subgroup of $G$. Then the Fitting subgroup $F(H)$ of $H$ is not cyclic and $|\pi(F(H))|\leqslant2$.
\end{lemma}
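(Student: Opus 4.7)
The plan is to address the two conclusions separately. The bound $|\pi(F(H))|\leqslant 2$ is immediate from Condition~(2) of Theorem~\ref{th:criterion}: since $F(H)$ is nilpotent, it splits as $F(H)=\prod_{r\in\pi(F(H))}O_r(H)$, and if three distinct primes $r_1,r_2,r_3$ lay in $\pi(F(H))$, three pairwise commuting elements of orders $r_i$ (one in each factor $O_{r_i}(H)$) would multiply to an element of order $r_1r_2r_3\in\omega(H)\subseteq\omega(G)$, a contradiction.

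For the non-cyclicity I would argue by contradiction: suppose $F(H)$ is cyclic. Then each $O_r(H)$ is cyclic, so $\Aut(F(H))$ is abelian, and from the solvable-group identity $C_H(F(H))=F(H)$ it follows that $H/F(H)$ is abelian. In the case $|\pi(F(H))|=1$, say $F(H)=O_p(H)$, the Sylow $p$-subgroup $\bar P$ of $H/F(H)$ is normal, so its preimage $A$ in $H$ is a normal $p$-subgroup of order $|H|_p$, i.e., a normal Sylow $p$-subgroup of $H$. By maximality of $O_p(H)$ we get $A\leqslant O_p(H)=F(H)$, forcing $A=O_p(H)$ to be cyclic. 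Since $H$ is Hall in $G$, $A$ is also a Sylow $p$-subgroup of $G$, contradicting Lemma~\ref{l:p^2exists}.

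The substantive case is $|\pi(F(H))|=2$, $F(H)=O_p(H)\times O_q(H)$. Here I would take the generators $y_p$, $y_q$ of the unique subgroups of order $p$, $q$ inside the cyclic factors and set $y=y_py_q$ of order $pq$. Since $y_p$ and $y_q$ are the primary components of $y$, $C_H(y)=C_H(y_p)\cap C_H(y_q)$, and if $z\in C_H(y)$ had prime order $r\notin\{p,q\}$ it would commute with $y_p$ and $y_q$ separately, yielding an element $zy$ of order $rpq$ forbidden by Condition~(2). Thus $\pi(C_H(y))\subseteq\{p,q\}$. Because $[H:C_H(y_p)]$ divides $p-1$ and $[H:C_H(y_q)]$ divides $q-1$, the index $[H:C_H(y)]$ divides $(p-1)(q-1)$, and consequently $|H|_r$ divides $(p-1)(q-1)$ for every $r\in\pi(H)\setminus\{p,q\}$.

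The main obstacle is to convert these divisibility constraints into a contradiction. When $\pi(H)\supsetneq\{p,q\}$, combining $r^2\leqslant|H|_r$ from Lemma~\ref{l:p^2exists} with the above divisibility and iterating the centralizer trick for other pairs of primes, together with repeated applications of Lemma~\ref{l:pi4}, should close the case. When $\pi(H)=\{p,q\}$ the divisibility is vacuous, and the harder subcase has to be settled by dissecting the Sylow $p$-subgroup $P$: writing $P$ as a metacyclic extension of the cyclic group $O_p(H)$ by an abelian $p$-subgroup of $\Aut(F(H))$ and exploiting the non-cyclicity of $P$ guaranteed by Lemma~\ref{l:p^2exists}, perhaps with a Hall--Higman style input along the lines of Lemma~\ref{l:hh}, one should force $P$ to coincide with $O_p(H)$, and hence be cyclic, producing the desired contradiction. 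This last subcase is precisely the territory beyond~\cite{21GorMas} noted in Remark~3, and I expect it to be the place where the genuine novelty of the argument resides.
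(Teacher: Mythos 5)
Your reduction to $|\pi(F(H))|\leqslant2$ and your treatment of the case where $F(H)$ is a cyclic $p$-group are both correct (the latter is a slight variant of the paper's argument: you push the Sylow $p$-subgroup of $H$ into $O_p(H)$ via normality in the abelian quotient $H/F(H)$, whereas the paper observes that the $p'$-part of $\Aut F(H)$ is cyclic while $H/F(H)$ would have to contain a noncyclic $p'$-subgroup; either works). The genuine gap is the case $|\pi(F(H))|=2$, which you do not actually close. The divisibility constraints you extract ($|H|_r$ divides $(p-1)(q-1)$ for $r\notin\{p,q\}$) do not by themselves contradict $r^2\leqslant |H|_r$, and your proposed continuation --- ``iterating the centralizer trick \dots should close the case'' and ``one should force $P$ to coincide with $O_p(H)$'' --- is a statement of intent, not a proof. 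In particular the subcase $\pi(H)=\{p,q\}$, which you yourself flag as vacuous for your divisibility argument, is left entirely open.

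The paper settles the two-prime case by a short argument that bypasses all of this. Order the primes so that $p>q$ and let $W$ be the (cyclic) Sylow $q$-subgroup of $F(H)$; since $|\Aut W|=q^{a-1}(q-1)$ is coprime to $p$, a Sylow $p$-subgroup $P$ of $H$ centralizes $W$. Because $H/F(H)$ is abelian, the image of $P$ there is normal, so $PF(H)=P\times W$ is normal in $H$; then $P$, being characteristic in $P\times W$, is normal in $H$, hence $P\leqslant F(H)$, so $P$ is cyclic --- contradicting Lemma~\ref{l:p^2exists}, since $H$ is a Hall subgroup and $P$ is therefore a Sylow $p$-subgroup of $G$. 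The missing idea in your attempt is precisely this use of the ordering $p>q$ to kill the action of $P$ on $W$, after which the abelianness of $H/F(H)$ that you already established does all the remaining work; no analysis of element centralizers or of indices dividing $(p-1)(q-1)$ is needed.
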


\begin{proof}
Denote $F(H)$ by~$F$. The inequality $|\pi(F)|\leqslant2$ follows from Condition 2 of the theorem. 

Assume that $F$ is cyclic. Then $H\neq F$ by Lemma \ref{l:p^2exists}. Since the group $H$ is solvable, we have  $C_H(F)\subseteq F$ \cite[Theorem 6.1.3]{68Gor}. Therefore, the quotient group $H/F$ embeds in the group $\Aut F$ and, in particular, is abelian. If $F$ is a $p$-group, then by Lemma~\ref{l:p^2exists}, $H/F$ contains a noncyclic $p'$-group, which is impossible because the $p'$-part of $\Aut F$ is cyclic. Let $\pi(F)=\{p,q\}$, where $p>q$, and let $W$ be a Sylow $q$-subgroup of~$F$. Since $p>q$, a Sylow $p$-subgroup $P$ of $H$ centralizes~$W$. The group $H/F$ is abelian, so the image of $P$ in $H/F$ is normal in~$H/F$, and hence $P\times W$ is normal in~$H$. It follows that $P\leq F$, which contradicts Lemma~\ref{l:p^2exists}.
\end{proof}

\begin{lemma}\label{l:minSyl}
Every minimal normal subgroup of $G$ is a Sylow subgroup~of~$G$.
\end{lemma}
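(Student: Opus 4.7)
The plan is to argue by contradiction: assume a minimal normal subgroup $N$ of $G$ is not a Sylow subgroup, and derive a contradiction with the minimality of~$G$ as a counterexample to Theorem~\ref{th:criterion}. Since $G$ is solvable, $N$ is elementary abelian of some prime exponent~$p$, and the assumption that $N$ is not a Sylow subgroup means $p$ still divides $|G/N|$, so $\pi(G/N) = \pi(G) = \{p,q,r,s\}$ has four elements.

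The first step is the standard inclusion $\omega(G/N) \subseteq \omega(G)$: if $\bar g \in G/N$ has order $n$, any lift $g \in G$ has order $nm$ for some integer~$m$, and $g^m$ has order exactly $n$ in~$G$. Hence Condition~(2) of the theorem is automatically inherited by $G/N$. Next I would check Condition~(1) for $G/N$. For primes $p_i, p_j \in \pi(G) \setminus \{p\}$, an element $x\in G$ of order $p_i p_j$ projects to an element of the same order, because any strict divisor of $p_i p_j$ occurring as $|\bar x|$ would force a nontrivial element of order coprime to~$p$ into the $p$-group~$N$. So the only pairwise products that could be missing from $\omega(G/N)$ are those of the form $pt$ with $t \in \{q,r,s\}$. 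If all three such products lie in $\omega(G/N)$, then $G/N$ satisfies the full hypothesis of Theorem~\ref{th:criterion} with $|\pi(G/N)| = 4$, and minimality of~$G$ would force $G/N$ to be nonsolvable --- contradicting that $G/N$ is a quotient of the solvable group~$G$.

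Thus some product $pt$ with $t \in \{q,r,s\}$ must be absent from $\omega(G/N)$. Choosing $x \in G$ of order $pt$, a short case analysis on $|\bar x|$ (both $|\bar x|=1$ and $|\bar x|=p$ would force a nontrivial element of order~$t$ into~$N$) forces $|\bar x|=t$, so $z := x^t$ is a nontrivial element of $N$ commuting with the $t$-element $y := x^p$. Moreover, since $z \in N$ and $G/N$ acts on $N$ by conjugation with orbit length dividing $|N|-1$, a $p'$-number, the centralizer $C_G(z)$ contains a full Sylow $p$-subgroup $P$ of~$G$, and $P$ strictly contains~$N$. Together with $y \in C_G(z)$ this gives $\{p,t\} \subseteq \pi(C_G(z))$.

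The hard part is bootstrapping this up to a contradiction with Lemma~\ref{l:pi4}, which bounds $|\pi(C_G(z))| \leqslant 3$. My plan is to split into two cases. If $N \subseteq Z(G)$, then by minimality $|N| = p$, and multiplying the central element $z$ with any element of order $t'u'$ produced by Condition~(1) for two primes $t',u' \in \{q,r,s\}$ gives an element of order $pt'u'$ in $G$, directly violating Condition~(2). In the remaining case $N \not\subseteq Z(G)$, the idea is to apply the same image analysis to elements of order $pt'$ for each $t' \in \{q,r,s\}$ and combine the resulting centralization data with the non-cyclic Fitting structure of a Hall $\{q,r,s\}$-subgroup of~$G$ (Lemma~\ref{l:noncyclic}) acting coprimely on~$N$, so as to force all four primes of $\pi(G)$ into $\pi(C_G(z))$. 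The main obstacle will be upgrading the single failure ``$pt \notin \omega(G/N)$'' to simultaneous centralization of a common nontrivial element of~$N$ by elements of all three primes in $\{q,r,s\}$; here I expect the coprime action of a $p'$-Hall subgroup on~$N$, together with Maschke-type decomposition of~$N$ as an $\mathbb{F}_p$-module, to do the necessary work.
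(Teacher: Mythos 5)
Your proposal is a plan, not a proof: the decisive step is missing. The first half is sound --- passing to $G/N$, noting $\omega(G/N)\subseteq\omega(G)$, checking that only the products $pt$ with $t\in\{q,r,s\}$ can drop out of the spectrum, and invoking minimality to conclude that some $pt\notin\omega(G/N)$, which yields a nontrivial $z\in N$ of order $p$ centralized by an element $y$ of order $t$. But from here you only have $\{p,t\}\subseteq\pi(C_G(z))$, and it is entirely possible that exactly one of the three products $pq$, $pr$, $ps$ is missing from $\omega(G/N)$; in that case your analysis produces centralization data for a single prime $t$, and there is no visible route to putting all four primes into one centralizer so as to contradict Lemma~\ref{l:pi4}. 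The ``coprime action plus Maschke-type decomposition'' you invoke for the hard part is exactly where a real argument would have to live, and you do not supply it --- you only say you expect it to work. There is also a concrete error along the way: the orbit of $z$ under conjugation has length $|G:C_G(z)|$, which is bounded above by $|N|-1$ but need not divide it, so your conclusion that $C_G(z)$ contains a full Sylow $p$-subgroup does not follow (the orbit could have length $p$, say, when $|N|=p^2$ and the Sylow $p$-subgroup is larger). That claim is not needed to get $p\in\pi(C_G(z))$, since $z$ itself has order $p$, but it signals that the route is not under control.

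The paper goes in the opposite direction and the whole proof is three lines: instead of the quotient $G/N$, it considers the subgroup $NH$, where $H$ is a Hall $p'$-subgroup of $G$. By Lemma~\ref{l:p^2exists}, $H$ contains an elementary abelian subgroup of order $t^2$ for each $t\in\pi(H)$, so Lemma~\ref{l:r^2s^2} puts $pt$ into $\omega(NH)$ for every such $t$, while the products of two primes from $\pi(H)$ are already realized inside $H$. Hence $NH$ satisfies both conditions of the theorem, minimality forces $G=NH$, and since $H$ is a $p'$-group this says precisely that $N$ is a Sylow $p$-subgroup. The ingredient your sketch lacks is an analogue of Lemma~\ref{l:r^2s^2} converting the noncyclic $t$-subgroups guaranteed by Lemma~\ref{l:p^2exists} into elements of order $pt$; once that is in hand, working in the subgroup $NH$ rather than the quotient is the natural move.
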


\begin{proof}
Let $N$ be a minimal normal $p$-subgroup of~$G$, and let $H$ be a Hall $p^{\prime}$-subgroup of~$G$. By Lemma~\ref{l:p^2exists}, the group $H$ contains an elementary abelian $q$-subgroup of order $q^2$ for every $q\in\pi(H)$, and by Lemma~\ref {l:r^2s^2}, the subgroup $NH$ contains an element of order~$pq$. Thus, the group $NH$ satisfies the hypotheses of the theorem, and so $G=NH$.
\end{proof}

We fix until the end of the proof the notation: $F$ is the Fitting subgroup $F(G)$ of~$G$ and $H$ is a complement of $F$ in~$G$, i.e., a Hall $\pi(F)'$-subgroup of~$G$.
By Lemma~\ref{l:minSyl}, the group $F$ is a direct product of Sylow noncyclic elementary abelian subgroups of~$G$.

\begin{lemma}\label{l:normH}
Let $N$ be a normal $r$-subgroup of~$H$. Then either $N$ is a Sylow $r$-subgroup of~$G$ and $\gexp(N)=r$, or $N$ is a cyclic group of odd order, or $N$ has a characteristic subgroup of order~$2$.
\end{lemma}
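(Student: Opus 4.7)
The plan is to distinguish two cases according to whether $N$ has a unique subgroup of order $r$, and for the remaining case to exploit the faithful action of $N$ on the Fitting subgroup $F$.

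If $\Omega_1(N)$ has order $r$, then by a classical theorem on finite $r$-groups with a unique minimal subgroup (see, e.g., \cite[Theorem~5.4.10]{68Gor}), $N$ is cyclic when $r$ is odd and is cyclic or generalized quaternion when $r=2$. The odd case is conclusion~(b); when $r=2$ the unique subgroup of order $2$ is characteristic in $N$, which is conclusion~(c).

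Otherwise $N$ contains an elementary abelian subgroup $E\cong(\mathbb{Z}_r)^2$, and the aim is conclusion~(a). Since $G$ is solvable one has $C_G(F)\le F$; since $r\notin\pi(F)$, this yields $N\cap F=1$, so $N$ acts faithfully on $F$ by conjugation. By Lemma~\ref{l:minSyl}, $F=\prod_{p\in\pi(F)}V_p$ with each $V_p$ an elementary abelian Sylow $p$-subgroup of $G$; Lemma~\ref{l:split} lets us pass to the split extension $V_p\rtimes E$ when computing element orders. The heart of the argument is to produce a forbidden triple product in $\omega(G)$. Because the non-cyclic abelian group $E\cong(\mathbb{Z}_r)^2$ cannot act fixed-point-freely on any non-trivial elementary abelian group, for each $V_p$ on which $E$ acts non-trivially some non-trivial element of $E$ has a non-trivial fixed point in $V_p$. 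Combined with Lemma~\ref{l:r^2s^2} (and, where needed, with the Hall--Higman-type Lemma~\ref{l:hh}, whose Fermat-prime exception is bypassed by the presence of $E$), this produces, when $|\pi(F)|=2$, an element of $\omega(G)$ divisible by three distinct primes from $\pi(G)$, contradicting Condition~(2). When $|\pi(F)|=1$ the third prime is supplied by a non-cyclic Sylow $s$-subgroup of $H$ with $s\neq r$ (Lemma~\ref{l:p^2exists}), again via Lemma~\ref{l:r^2s^2} applied to a suitable subgroup of $V_p\rtimes H$. The same mechanism, applied to a hypothetical $n\in N$ of order $r^2$, rules out $\gexp(N)>r$; applied to an element of a strictly larger $r$-subgroup of $G$ that normalises $N$, it forces $N$ itself to be a Sylow $r$-subgroup of $G$.

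The main obstacle is the coordination step in the second case: orchestrating the three distinct primes that build the forbidden triple product. This will demand a delicate interplay between the fixed-point structure of $E$ on each $V_p$, the non-cyclic Sylow subgroups of $H$ supplied by Lemma~\ref{l:p^2exists}, and the Hall--Higman-type Lemma~\ref{l:hh}, with particular attention to the Fermat-prime exception and to the boundary subcase $|\pi(F)|=1$ in which $F$ contributes only one of the three required primes.
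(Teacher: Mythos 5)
Your opening reduction is fine and matches the paper: if $N$ has a unique subgroup of order $r$, then $N$ is cyclic or generalized quaternion, giving the second or third alternative. The proposal breaks down in the main case, where $N$ contains an elementary abelian subgroup $E$ of order $r^2$. There the conclusion to be established is the \emph{first} alternative --- that $N$ is a Sylow $r$-subgroup of $G$ of exponent $r$ --- not a contradiction. Your plan to extract a forbidden triple product fails: Lemma~\ref{l:r^2s^2} applied to $V_p\rtimes E$ and to $V_q\rtimes E$ yields elements of orders $pr$ and $qr$, but the relevant fixed points may belong to \emph{different} elements of $E$ (for instance, for $r=2$ one involution of $E$ may invert $V_p$ and centralize $V_q$ while another does the opposite), so no element of order $pqr$ is forthcoming. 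This two-Frobenius configuration is exactly the obstruction noted in Remark~1 for $|I|=3$, and excluding it is the whole content of the long Lemmas~\ref{l:Fneq1} and~\ref{l:Fneq2}; if the contradiction you assert were available at this stage, those lemmas would be redundant. The same objection hits your $|\pi(F)|=1$ subcase, where invoking Lemma~\ref{l:r^2s^2} with two factors $R_1\times R_2$ would require $E$ to commute with an elementary abelian subgroup of order $s^2$, which nothing guarantees.

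The two ingredients you are missing are the minimality of the counterexample $G$ and the structure of $r$-groups all of whose characteristic abelian subgroups are cyclic. The paper argues that the subgroup $FNH_1$, with $H_1$ a Hall $r'$-subgroup of $H$, satisfies Conditions~1 and~2 of the theorem: the products $rp$ for $p\in\pi(F)$ come from Lemma~\ref{l:r^2s^2} applied inside $FN$, the products $rt$ for $t\in\pi(H_1)$ from the same lemma applied to $N\rtimes T_0$ with $T_0$ elementary abelian of order $t^2$ (supplied by Lemma~\ref{l:p^2exists}), and the remaining products are found inside the Hall $r'$-subgroup $FH_1$. Minimality then forces $FNH_1=G$, so $N$ is a Sylow $r$-subgroup of $G$. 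The same argument applied to characteristic subgroups of $N$ shows that $N$ has no proper noncyclic characteristic abelian subgroup and that $\Omega_1(N)=N$; from this, $\gexp(N)=r$ follows directly if $N$ is abelian, and via \cite[Theorem~5.5.3]{68Gor} if $N$ is nonabelian of odd order, while a nonabelian $2$-group lands in the third alternative through its cyclic center. Your ``same mechanism'' sketch for the Sylow and exponent claims does not engage with either ingredient.
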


\begin{proof}
If $N$ is a cyclic group of even order or $N$ is not cyclic but does not contain elementary abelian subgroups of order $r^2$, then $r=2$ and $N$ has a characteristic subgroup of order~$2$. Hence, we may assume that $N$ includes an elementary abelian subgroup of order~$r^2$. By Lemma~\ref{l:r^2s^2}, the subgroup $FN$ contains an element of order $rp$ for every $p\in \pi(F)$. Arguing further as in Lemma~\ref{l:minSyl}, we obtain that $N$ is a Sylow subgroup of~$H$, and so of~$G$.

By the minimality of~$G$, there are no proper noncyclic characteristic abelian subgroups in~$N$. If $N$ is abelian, then $N=\Omega_1(N)$ and $\gexp(N)=r$, as required. Let $N$ be nonabelian. Then $Z(N)$ is a cyclic group, and for $r=2$, $N$ has a characteristic subgroup of order $2$. If $r\neq 2$, then in virtue of \cite[Theorem~5.5.3]{68Gor} the group $N$ is the central product of a cyclic group and an extraspecial group $M$ of period $r$. Then $N=\Omega_1(N)=M$, and we are done.
\end{proof}

Recall that  $|\pi(F)|\leqslant2$ by Lemma \ref{l:noncyclic}. Therefore, Theorem~\ref{th:criterion} will follow from the next two lemmas.

\begin{lemma}\label{l:Fneq1} $|\pi(F)|\neq1$.
\end{lemma}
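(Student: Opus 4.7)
The plan is to assume for contradiction that $F$ is a single-prime group and produce an element of $G$ whose order is a product of three distinct primes from $\pi(G)=\{p,q,r,s\}$, contradicting Condition~(2). By Lemmas~\ref{l:p^2exists}--\ref{l:minSyl}, such an $F$ must be a noncyclic elementary abelian Sylow $p$-subgroup. Since $F$ is abelian and equals $F(G)$ in a solvable group, $C_G(F)=F$, so writing $G=F\rtimes H$ with $H$ a Hall $p'$-subgroup, one has $\pi(H)=\{q,r,s\}$ and $H$ acts faithfully on $F$. My target is to find, inside $H$, two commuting elementary abelian subgroups $U,V$ of orders $u^2,v^2$ for distinct $u,v\in\pi(H)$; then Lemma~\ref{l:r^2s^2} applied to $F\rtimes(U\times V)$ yields an element of order $puv$ in $G$, the required contradiction.

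To locate $U$ and $V$, I would first apply Lemma~\ref{l:noncyclic} to $H$ itself: $F(H)$ is noncyclic and $|\pi(F(H))|\leqslant 2$. If $|\pi(F(H))|=2$, write $F(H)=Q_0\times R_0$ with commuting nontrivial normal $q$- and $r$-subgroups, and classify each factor via Lemma~\ref{l:normH} as elementary abelian Sylow, cyclic of odd order, or having a characteristic subgroup of order~$2$. In the generic case both are elementary abelian Sylow and $U,V$ sit directly inside $F(H)$. When one factor is exceptional, I would instead use that the corresponding Sylow subgroup of $H$ is noncyclic by Lemma~\ref{l:noncyclic}, combined with Lemma~\ref{l:hh} applied to its action on $F$, to either build the commuting pair or force a Frobenius configuration on which Lemma~\ref{l:frob} applies. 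If $|\pi(F(H))|=1$, with $F(H)=T$ a normal noncyclic $q$-subgroup, Lemma~\ref{l:normH} forces $T$ to be elementary abelian Sylow (the cyclic alternative being excluded). Then $H/T$ acts faithfully on $T$ since $C_H(T)\subseteq T$, and analyzing a Hall $\{r,s\}$-subgroup of $H/T$ via Lemma~\ref{l:noncyclic} lifts to produce the required commuting elementary abelian configuration, one factor lying inside $T$.

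The main obstacle is the ``quaternion-type'' exception in Lemma~\ref{l:normH} at $q=2$: when a relevant Fitting factor is a noncyclic $2$-group without an elementary abelian subgroup of order~$4$, Lemma~\ref{l:r^2s^2} cannot be applied at the prime $2$, and one must instead exploit the resulting characteristic involution (which is centralized by a large subgroup of $H$) together with the Fermat-prime restriction of Lemma~\ref{l:hh}(3), combining it with the Frobenius construction of Lemma~\ref{l:frob}, to still recover a triple-product element.
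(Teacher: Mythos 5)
Your setup coincides with the paper's: $F$ is a noncyclic elementary abelian Sylow $p$-subgroup, $H$ is a Hall $p'$-complement acting faithfully on it, and in the generic case one finds two commuting elementary abelian squares inside $F(H)$ and applies Lemma~\ref{l:r^2s^2} to obtain a forbidden order $pqr$. The gap lies exactly where the paper's proof does its real work, namely in the degenerate alternatives of Lemma~\ref{l:normH}. First, when $\pi(F(H))=\{q,r\}$ and the Sylow $r$-part $V$ of $F(H)$ is \emph{cyclic of odd order}, no commuting pair of squares at the prime $r$ is available, and your plan to ``apply Lemma~\ref{l:hh} to its action on $F$ \dots or force a Frobenius configuration'' is not yet an argument. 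The paper's treatment of this case is long and delicate: it shows that $C_S(V)$ has a unique subgroup $Z$ of order $s$ (so $s<r$), passes to the quotient $J/V$ of a Hall $\{r,s\}$-subgroup $J$, applies Lemma~\ref{l:hh} to an action on $Q$ (not on $F$) to force the image of $Z$ into $F(J/V)$, deduces that $Z$ centralizes a Sylow $r$-subgroup, shows that $QZ$ is a Frobenius group and invokes Lemma~\ref{l:frob} to get $C_F(Z)\neq1$, and only then uses Lemma~\ref{l:r^2s^2}. None of these steps is recoverable from your sketch. Second, in the case $|\pi(F(H))|=1$ your claim that one can ``lift to produce the required commuting elementary abelian configuration, one factor lying inside $T$'' is unjustified: the noncyclic Sylow factor $W$ of the Fitting subgroup of a Hall $\{r,s\}$-subgroup need not centralize any noncyclic subgroup of $T$, and the paper does \emph{not} conclude this case by producing a triple product at all. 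Instead it finds an element $x$ of order $s$ with $C_W(x)$, $C_T(x)$ and $C_F(x)$ all nontrivial (via Lemma~\ref{l:hh}) and contradicts Lemma~\ref{l:pi4} because $|\pi(C_G(x))|=4$ --- a mechanism your proposal never mentions.

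A smaller but telling point: you single out the generalized-quaternion exception at the prime $2$ as ``the main obstacle,'' but in the paper that case is dispatched in a few lines (the characteristic involution is central in $H$ and immediately yields an element of order $2rs$, or fixed points via Lemma~\ref{l:frob}). The genuinely hard configurations are the cyclic odd-order normal subgroups described above, and as it stands the proposal handles only the easy half of the lemma.
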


\begin{proof}
Assume the opposite. Then $F$ is a noncyclic elementary abelian $p$-group which is a Sylow subgroup of~$G$, and $H$ is a Hall $p^\prime$-subgroup of~$G$.

Let $N$ be a normal primary subgroup of $H$. If $N$ has a characteristic subgroup of order~$2$, then this subgroup is central in~$H$ and, therefore, $G$ contains an element of order $2rs$ forbidden by Condition 2, where $\{r,s\}=\pi(G)\setminus\{2,p\}$. Applying Lemma~\ref{l:normH}, we obtain that either $N$ is a Sylow subgroup of $G$ of prime exponent, or $N$ is a cyclic group of odd order.

Consider now the Fitting subgroup $U=F(H)$ of~$H$. By Lemma~\ref{l:noncyclic}, the group $U$ is not cyclic. Hence, one of the Sylow subgroups of $U$, say, a $q$-subgroup, is not cyclic. Thus, by the previous paragraph, $U$ contains a Sylow $q$-subgroup $Q$ of $G$ and $Q$ is of prime exponent. Below we consider separately the cases when $|\pi(U)|=2$ and $|\pi(U)|=1$.

Let $\pi(U)=\{q,r\}$ and $V$ be a Sylow $r$-subgroup of~$U$. Then $U=Q\times V$. If $V$ is also a Sylow subgroup of $G$, then, applying Lemma~\ref{l:r^2s^2}, we see that $pqr\in\omega(G)$. Hence $V$ is a cyclic group and $r\neq2$.

Let $S$ be a Sylow $s$-subgroup of~$G$. If $C_S(V)$ contains a noncyclic elementary abelian subgroup, then $C_S(V)$ contains an element that centralizes nontrivial elements in both $Q$ and $V$, and so $H$ contains an element of forbidden order $qrs$. Therefore, $C_S(V)$ has at most one subgroup of order~$s$. In particular, $C_S(V)<S$ due to Lemma~\ref{l:p^2exists}. On the other hand, $S/C_S(V)=N_S(V)/C_S(V)$ is a cyclic group of order dividing $r-1$. Hence $C_S(V)\neq 1$ and $s<r$. Denote by $Z$ the unique subgroup of order $s$ in~$C_S(V)$. It is clear that $Z\unlhd S$ and, therefore, $Z\leq Z(S)$.

Let $J$ be a Hall $\{r,s\}$-subgroup of $H$ containing $S$ and $\ov{J}=J/V$. The images of other subgroups of $J$ in $\ov J$ will also be denoted by a bar.

Let $\ov K=F(\ov{J})$ and $\ov C=\ov{C_S(V)}\cap \ov K$. Then $\ov C$ is normal in $\ov J$, because $$\ov C=\ov{C_{J}(V)}\cap\left(\ov S\cap\ov K\right),$$ while $\ov{C_{J}(V)}$ and $\ov S\cap\ov K$ are normal subgroups of~$\ov J$. Assume that $\ov Z\not\leq \ov K$. Since $C_{\ov J}(\ov K)\leq \ov K$, $\ov K$ must contain a Sylow $r$-subgroup~$\ov T$ on which $\ov Z$ acts nontrivial. Applying Lemma~\ref{l:hh} to the action of $\ov T\rtimes \ov Z$ on $Q$ and noting that $r\neq2$, we obtain an element of order $sq$ in~$QZ$. However, $QZ$ centralizes $V$, so $G$ contains an element of forbidden order $sqr$. Thus, $\ov Z\leq \ov K$, whence $\ov Z\leq \ov C$, and $\ov Z$ is normal in $\ov J$ as the only subgroup of $\ov C$ of order~$s$.

Let $R$ be a Sylow $r$-subgroup of $J$ (and~$G$). Then $[\ov R,\ov Z]=1$ since $r>s$. Therefore, $Z$ stabilizes the normal series $1<V<R$ and so acts trivially on~$R$ \cite[Lemma 5.3.2]{68Gor}. Consider the action of $R\times Z$ on~$QF$. The group $Z$ acts on $Q$ without fixed points, otherwise $G$ contains an element of order $sqr$. Hence $QZ$ is a Frobenius group, and since $C_G(F)=F$, it acts on $F$ faithfully. Therefore, $C_F(Z)\neq 1$ in view of Lemma~\ref{l:frob}. The group $R$ acts on $C_F(Z)$ and, applying Lemmas \ref{l:p^2exists} and \ref{l:r^2s^2}, we obtain an element of order $rpq$; a contradiction.

Let now $\pi(U)=\{q\}$, i.e., $U=Q$ is a Sylow $q$-subgroup of~$G$. Let, as above, $J$ be a Hall $\{r,s\}$-subgroup of~$H$. Then $G=FUJ$.

Put $V=F(J)$. By Lemma~\ref{l:noncyclic}, the group $V$ is not cyclic. Hence one of its Sylow subgroups, say, the $r$-subgroup $W$ is not cyclic. Denote by $S$ a Sylow $s$-subgroup of~$J$ (and~$G$).

Assume that $r=2$ and $W$ contains a unique involution~$z$. Then the whole group $S$ centralizes~$z$. If $C_U(z)=1$, then $U\rtimes\langle z\rangle$~ is a Frobenius group and
$C_F(z)\neq 1$ by Lemma~\ref{l:frob}. Hence, at least one of the centralizers $C_U(z)$ and $C_F(z)$ is nontrivial. This centralizer is invariant under the action of $S$, so applying Lemma~\ref{l:r^2s^2}, we obtain an element of order $2sq$ or $2sp$; a contradiction. Arguing in the same way as in Lemma~\ref{l:normH}, we conclude that $W$ is a Sylow $r$-subgroup of $G$ and either $W$ is abelian or has an odd order.

Let $\pi(V)=\{r,s\}$ and $T$ a Sylow $s$-subgroup of~$V$, i.e., $V=W\times T$. Repeating the argument of the previous paragraph about the action of $S\times\langle z\rangle$ on~$UF$ with $W$ instead of~$S$ and any element $x\in T$ instead of~$z$, we arrive at a contradiction.

Thus, $V=W$. There is a nontrivial element $x\in S$ of order $s$ such that $C_V(x)\neq 1$. On the other hand, $[V,x]\neq 1$. The group $V\rtimes\langle x\rangle$ acts faithfully on $U$ and on~$F$. By Lemma \ref{l:hh}, we get that $C_U(x)\neq 1$ and $C_F(x)\neq 1$. Hence $|\pi(C_G(x))|=4$. This contradiction completes the proof of the lemma.
\end{proof}

\begin{lemma}\label{l:Fneq2} $|\pi(F)|\neq2$.
\end{lemma}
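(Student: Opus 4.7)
The plan is to derive a contradiction from $|\pi(F)|=2$. Set $\pi(F)=\{p,q\}$, $F=P\times Q$ with $P,Q$ noncyclic elementary abelian Sylow subgroups of $G$ (Lemmas~\ref{l:minSyl} and~\ref{l:p^2exists}), $\pi(G)\setminus\{p,q\}=\{r,s\}$, and $H$ a Hall $\{r,s\}$-subgroup. Since $G$ is solvable, $C_G(F)=F$ and $H$ acts faithfully on $F$. The pervasive constraint, coming from Lemma~\ref{l:pi4} together with $\{p,q\}\subseteq\pi(C_G(x))$ for every non-trivial $x\in F$, is that $C_H(x)$ is either trivial or a primary group; in particular, no element of $H$ of order divisible by both $r$ and $s$ fixes any non-zero point of $F$.

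The first step is to argue that every noncyclic normal primary subgroup $N$ of $H$ is a Sylow subgroup of $G$ of prime exponent. By Lemma~\ref{l:normH} the alternatives are that $N$ is cyclic of odd order (excluded by the noncyclic assumption and Lemma~\ref{l:p^2exists}) or that $N$ has a characteristic subgroup $\langle z\rangle$ of order~$2$. The latter is ruled out as follows: $z\in Z(H)$ acts as an involution on $F=P\times Q$, so on pain of obtaining an element of forbidden order $2pq$ it must act fixed-point-freely on one factor, say $P$; then for any $s$-element $t\in H$ the commuting product $zt$ of order $2s$ must act fixed-point-freely on the whole of $F$, and applying Lemmas~\ref{l:frob} and~\ref{l:hh} to the action of $\langle z,t\rangle$ on $F$ produces a forbidden triple product $2sp$ or $2sq$.

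With this in hand I analyze $U=F(H)$, which by Lemma~\ref{l:noncyclic} is noncyclic with $|\pi(U)|\leqslant 2$. The cleanest case is $|\pi(U)|=2$ with both the $r$-part $R_0$ and the $s$-part $S_0$ of $U$ noncyclic: by Step~1 they are commuting Sylow subgroups of $G$ of prime exponent, each containing an elementary abelian subgroup of order $r^2$, resp.\ $s^2$, so Lemma~\ref{l:r^2s^2} applied to $P\rtimes(R_1\times S_1)$ immediately yields an element of forbidden order $prs$. The remaining configurations --- when one part of $U$ is cyclic, or when $|\pi(U)|=1$ (WLOG $U=R$ a Sylow $r$-subgroup of $G$ of exponent $r$, so $H=R\rtimes S$ with $S$ a Sylow $s$-subgroup) --- require a deeper analysis. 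In each of these I would extract an $s$-element $t$ acting nontrivially on $R$ and apply Lemma~\ref{l:hh} to the action of $\langle R,t\rangle$ on $P$ and separately on $Q$, producing nontrivial centralizers of $t$ in both $P$ and $Q$; combined with Lemma~\ref{l:r^2s^2} applied to an elementary abelian $r^2$-subgroup of $R$ that commutes with $t$, this should force an element of forbidden order $prs$ or $qrs$.

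The main obstacle is this last configuration, which parallels the end of the proof of Lemma~\ref{l:Fneq1} but is complicated by the presence of two Sylow subgroups in $F$: one must simultaneously control the $H$-action on $P$ and on $Q$, and the Hall--Higman exceptional case in Lemma~\ref{l:hh} (when $r=2$ and $s$ is a Fermat prime, so $R$ is nonabelian and $C_R(t)\neq 1$) needs particularly careful handling. The strongest lever against these exceptions is the constraint that every element of order $rs$ in $H$ must act fixed-point-freely on the whole of $F$; using this repeatedly, together with the pigeonhole-style distinction between $r$-elements centralizing something in $P$ versus in $Q$ forced by the forbidden orders $rpq$ and $spq$, should close the final cases.
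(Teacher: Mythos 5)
Your setup and the observation that $C_H(x)$ is primary for every nontrivial $x\in F$ are correct, but the proposal is an outline rather than a proof, and the two places where you defer the work are exactly where the real difficulties lie. Concretely, in your Step~1 the elimination of the alternative ``$N$ has a characteristic subgroup $\langle z\rangle$ of order $2$'' does not go through as described: the group $\langle z,t\rangle$ is abelian (cyclic of order $2s$), so Lemma~\ref{l:hh} is inapplicable (it requires $[R,g]\neq1$) and Lemma~\ref{l:frob} is inapplicable (it requires a Frobenius quotient). Indeed, the configuration in which the central involution $z$ inverts all of $F$ is consistent with everything established up to that point, and the paper does not try to rule out central involutions at all. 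Instead, in the case $2\in\pi(H)$ it takes a noncyclic subgroup $T$ of order $4$, uses the pigeonhole principle on the condition ``$C_P(t)=1$ or $C_Q(t)=1$'' to find two involutions of $T$ acting fixed-point-freely on the same factor, say $P$, concludes that their product $v$ centralizes $P$ and inverts $Q$, shows by a conjugation argument that $v\in Z(H)$, and finally multiplies $v$ by an element of order $ps$ to obtain the forbidden order $2ps$. None of this is in your sketch.

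The second gap is in the odd-order case, which you correctly identify as the main obstacle but do not resolve. Your plan --- apply Lemma~\ref{l:hh} to $R\rtimes\langle t\rangle$ acting on $P$ and on $Q$ separately to get $C_P(t)\neq1$ and $C_Q(t)\neq1$ --- fails precisely when $[R,t]$ lies in the kernel of the action on one factor, i.e.\ $[R,t]\leqslant C_R(Q)$ or $[R,t]\leqslant C_R(P)$, for then the hypothesis $[R,g]\neq1$ of Lemma~\ref{l:hh} is violated for the induced faithful action. The paper shows that this obstruction genuinely occurs: using minimality to make the Sylow $s$-subgroup $S$ elementary abelian of order $s^2$, it finds $x,y\in S$ with $[R,x]=C_R(Q)$ and $[R,y]=C_R(P)$, both of order $r$. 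The key idea, absent from your proposal, is the commutator identity $[g,xy]=[g,y][g,x]^y$, which shows that $[R,xy]$ is contained in neither $C_R(P)$ nor $C_R(Q)$, so that Hall--Higman applies to the single element $xy$ on both factors simultaneously and yields the forbidden triple product. (Note also that once one $s$-element has nontrivial fixed points in both $P$ and $Q$ you are already done with the forbidden order $pqs$; the auxiliary elementary abelian $r^2$-subgroup you invoke is not what is needed.) Without these two ideas the proof does not close.
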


\begin{proof}
Again, suppose the contrary. Then $F=P\times Q$, where $P$ and $Q$ are noncyclic elementary abelian Sylow $p$-subgroups and $q$-subgroups of~$G$, and $H$ is a Hall $\{r,s\}$-subgroup of~$G$.

Assume first that $H$ is of odd order, and let $U=F(H)$ be the Fitting subgroup of $H$. By Lemma~\ref{l:noncyclic}, the group $U$ is not cyclic, therefore, applying Lemma~\ref{l:normH}, we obtain that $U$ contains a Sylow subgroup of~$G$, say, an $r$-subgroup $R$, and $R$ has exponent~$r$. Denote by $S$ a Sylow $s$-subgroup of~$H$ (and~$G$). By the minimality of~$G$, the group $S$ is an elementary abelian group of order~$s^2$.

By Lemma~\ref{l:r^2s^2}, there are $x,y\in S$ such that $C_P(x)\neq1$ and $C_Q(y)\neq1$. Condition 2 implies that $C_Q(x)=C_P(y)=1$. In particular, $x\neq y$. If one of these elements, say $x$, centralizes $R$, then $R$ acts on $C_P(x)$ and we get an element of order~$prs$. Hence $[R,x]\neq 1$ and $[R,y]\neq1$.

If $[R,x]\cap C_R(Q)=1$, then by Lemma~\ref{l:hh}, we have $C_Q(x)\neq 1$, but this is not the case. In particular, $C_R(Q)>1$.
The group $C_R(Q)$ is cyclic, otherwise by Lemma~\ref{l:r^2s^2}, $G$ contains an element of order~$rqp$. Thus, $C_R(Q)$ is of order $r$ and $[R,x]=C_R(Q)$. Similarly, $[R,y]=C_R(P)$. It is clear that $C_R(Q)\cap C_R(P)=C_R(F)=1$, so $R$ includes a subgroup $[R,x]\times [R,y]$ of order~$r^2$. Let $g,h\in R$ be such that $[g,x]\neq1$, $[h,y]\neq1$. Then $[g,xy]=[g,y][g,x]^y\not\in [R,y]$ and $[h,xy]=[h,y][h,x]^y\not\in [R,x]$. Consequently, $[R,xy]$ lies neither in $C_R(P)$ nor in $C_R(Q)$, and hence there is an element of order $pqs$ in $G$; a contradiction.

Now let $2\in\pi(H)$ and let $s$ be the only odd prime divisor of the order of~$H$. By Lemma~\ref{l:p^2exists}, the group $H$ contains a noncyclic subgroup $T$ of order~$4$. If $t$  is an involution from $T$, then Condition 2 of the theorem yields $C_P(t)=1$ or $C_Q(t)=1$. Choose from $P$ and $Q$ the group on which two different involutions $t$ and $u$ of $T$ act fixed-point-freely. Let this be~$P$. Then the involution $v=tu$ centralizes $P$ and inverts~$Q$. The involution $v^h$ has the same property for every $h\in H$. If $v^h\neq v$, we get an element of forbidden order in $G$: $2pq$ in the case when the dihedral group $D=\langle v, v^h\rangle$ contains an elementary abelian subgroup of order $4$, and $spq$ otherwise. Thus, the involution $v$ lies in the center of~$H$. The group $PH$ contains an element $x$ of order~$ps$, so $xv$ is an element of order~$2ps$. This contradiction proves the lemma, and the theorem as well.
\end{proof}

\section{Recognition of squares of Suzuki groups}

In this section, we will prove Theorems~\ref{th:suzuki} and~\ref{th:suzuki32}. We will use the main properties of Suzuki groups listed in Lemma~\ref{l:suz} without explicit reference to this lemma.

Let $L=Sz(q)$, where $q=2^{\alpha}\geqslant 8$. The set $\mu(L)$ consists of the numbers $m_1(q)=4$, $m_2(q)=q-1$, $m_3(q)=q-\sqrt{2q}+1$, and $m_4(q)=q+\sqrt{2q}+1$. Note that $m_3(q)m_4(q)=q^2+1$, $q/2<m_i(q)<2q$ for $i\neq1$, and $(m_i(q),m_j(q))=1$ for $i\neq j$. In particular, $t(L)=4$.

Let $G$ be a finite group with $\omega(G)=\omega(L\times L)$. Then $$\mu(G)=\{m_i(q)\cdot m_j(q) \mid 1\leqslant i, j\leqslant 4 \text{ and } i\neq j\}.$$
By Corollary~\ref{c:sqr}, the group $G$ is nonsolvable. Let $K$ be the solvable radical of~$G$ and $\overline G=G/K$. Since $3\not\in\pi(G)$, it follows  that $\Soc(\overline G)=L_1\times L_2\times\dots\times L_k$, where $L_i=Sz(2^{\alpha_i})$ and $\alpha_i\geqslant 3$ for all $1\leqslant i\leqslant k$.

For every $i$, the number $2^{\alpha_i}-1$ must divide the exponent of~$G$, which in turn divides $4(2^{4\alpha}-1)$, so all $\alpha_i$ divide $\alpha$. Hence $m_2(2^{\alpha_i})$ divides $m_2(q)$. Furthermore, one of the numbers $m_3(2^{\alpha_i})$ and $m_4(2^{\alpha_i})$ divides $m_3(q)$, and the other divides $m_4(q)$ (see, e.g., \cite[p.~18]{99Luc}). This immediately implies that $k\leqslant 2$: otherwise, $4ps\in\omega(G)\setminus\omega(L\times L)$ for some prime divisors $p$ and $s$ of $m_2(2^{\alpha_2})$ and $m_3(2^{\alpha_3})$.

In what follows, we fix some primitive prime divisors $r_2$ and $r_4$ of the numbers $2^{\alpha}-1$ and $2^{4\alpha}-1$, respectively, and note that each of them is greater than~$\alpha$. In particular, none of them divides $|\Aut L_i/L_i|$ for any~$i$.

Let $k=1$. Suppose $\alpha_1\neq\alpha$. Then $r_2$ and $r_4$ do not divide $|\overline G|$ and so $r_2r_4\in\omega(K)$. It is clear that $r_4$ divides $m_i(q)$ for $i\in\{3,4\}$. Let $j\in\{3,4\}$ and $j\neq i$. Since $m_j(q)>q/2=2^{\alpha-1}\geqslant 2^{\alpha_1+1}$, it follows from Item 2 of Lemma~\ref{l:large_orders} that $m_j(q)\not\in\omega(\overline G)$. On the other hand, $r_2m_j(q), r_4m_j(q)\in\omega(G)$, so $r_2s, r_4s\in\omega(K)$ for some prime divisor $s$ of $m_j(q)/(m_j(q),\exp(\overline G))$. Similarly, if $T$ is a Sylow $2$-subgroup of~$G$, then $2s\in\omega(KT)$. Thus, the solvable group $KT$ satisfies the hypothesis of Theorem~\ref{th:criterion} with $\sigma(KT)=\{r_2,r_4, 2,s\}$; a contradiction.

Therefore, $\alpha_1=\alpha$ and $L_1=L$. Let $\alpha\geqslant5$. Denote by $G_1$ the preimage of $L$ in $G$ and show by induction on the order of~$K$ that $L$ is a direct factor in~$G_1$. Let $V$ be a minimal normal in $G_1$ subgroup of~$K$. Then $V$ is an elementary abelian $p$-group for some prime~$p$. By induction, we may assume that $G_1/V=K/V\times H/V$, where $H/V\simeq L$. Suppose $C_H(V)=V$. If $p=2$, then applying Lemma~\ref{l:frob} to a Frobenius subgroup of order $4m_3(q)$, we obtain that $8\in\omega(G)$. If $p$ is odd, then by Lemma~\ref{l:SuzRep}, $G$ contains an element of order $p^{l+1}$, where $p^l$ is the greatest power of~$p$ in~$\omega(L)$. Therefore, $C_H(V)=H$. In view of $\alpha\geqslant 5$, the Schur multiplier of~$L$ is trivial, so $H=V\times L$. Since $[K,L]\leq V$, the equalities $[K,L,L]=1$ and $[L,K,L]=1$ hold, and hence $[L,K]=[L ,L,K]=1$.

Thus, $K\times L=G_1\leq G$ and hence $\omega(K)\subseteq\omega(L)$. As above, choose $j\in\{3,4\}$ so that $r_4$ does not divide $m_j(q)$, and let $s\in\pi(m_j(q))$.
By virtue of Lemma~\ref{l:3coclique} and the inclusion $\omega(K)\subseteq\omega(L)$, the intersection $\pi(K)\cap\{2,r_2,r_4,s\}$ does not contain more than two numbers.
On the other hand, the numbers $2$, $r_2$, $r_4$ do not divide $|G/(K\times L)|$, so two of them must divide $|K|$. Denote by $p$ the third one. Then $pm_j(q)$ is coprime to~$|K|$, so $pm_j(q)\in\omega(\overline G)$. Applying Item~3 of Lemma~\ref{l:large_orders}, we get that $q>pm_j(q)>pq/2$; a contradiction.

Let $\alpha=3$. Then the Schur multiplier of~$L$ has order~$4$, $m_2(q)=7$, $m_3(q)=5$, and $m_4(q)=13$. It is clear that $\overline G=\Soc(\overline G)$. Suppose $7\not\in\pi(K)$ and  let $x\in G$ be an element of order~$7$. Since $\langle x\rangle$ is a Sylow subgroup of $G$, we have $\pi(C_K(x))=\{2,5,13\}$. By Lemma~\ref{l:3coclique}, $C_K(x)$ contains an element of order $pr$ for $p\neq r$ and  hence $7pr\in\omega(G)$; a contradiction. Hence $7\in\pi(K)$. Similarly, $5,13\in\pi(K)$.

Arguing as in the case $\alpha\geqslant 5$, we conclude that either $L$ is again a direct factor, i.e., in this case $G=K\times L$, or there is a normal solvable subgroup $W$ of $G$ such that the quotient group $G/W$ is the central product of a solvable group $N$ and a perfect central extension $M$ of $L$, and the center $Z(M)$ is nontrivial. In the first case, applying Lemma~\ref{l:3coclique}, we see that the product of two of the numbers $5,7,13$ lies in $\omega(K)$, and hence the product of all three numbers lies in $\omega(G)$; a contradiction. 

Let us consider the second case. Since $Z(M)\neq1$, there are no odd numbers in $\pi(N)$; in particular, $7\in\pi(W)$. Denote by $J$ the preimage of $M$ in $G$. By the Frattini argument, we may assume that a Sylow $7$-subgroup $P$ of $W$ is normal in~$J$. Then replacing $G$ by $G/\Phi(P)$, we may assume that $P$ is elementary abelian. Finally, by Lemma \ref{l:split} we may assume that $J$ is a split extension of $P$ by $\widetilde J\simeq J/P$. If $C_J(P)=J$, then $2\cdot 5\cdot 7\in\omega(G)$. 

Let $C_J(P)W/W\leqslant Z(M)$ and let $R$ be a Sylow 2-subgroup of $\widetilde J$.  By the Frattini argument, there is an element $g\in N_{\widetilde J}(R)$ of order $7$, and let $T=R\rtimes \langle g\rangle$. Since $C_J(P)W/W\leqslant Z(M)$, it follows that there is a homomorphism from $T/C_P(T)$ 
onto a Borel subgroup of $L$, which is a Frobenius group with kernel of order $2^6$ and complement of order~$7$. Hence an element of $T/C_T(P)$ of order 7 acts on the Sylow 2-subgroup of $T/C_T(P)$ nontrivially. Applying Lemma~\ref{l:hh} to the action of $T/C_T(P)$ on $P$ and noting that $7$ is not a Fermat prime, we obtain an element of order $49$ in~$G$. This contradiction completes the proof that $k\neq 1$. 

Thus $k=2$. Suppose $K\neq1$ and show that $\omega(G)\not\subseteq\omega(L\times L)$. Without loss of generality, we may assume that $K$ is an elementary abelian $p$-group. By Lemma~\ref{l:split}, we may also assume that $L_1\times L_2$ is a subgroup of~$\ov{G}$. Suppose that $p\neq 2$ or $L_1\times L_2$ centralizes $K$. There are odd $r\in\pi(L_1)$ and $s\in\pi(L_2)$ such that $prs\not\in\omega(L\times L)$, and let $x\in L_1$ and $y\in L_2$ be elements of orders $r$ and~$s$, respectively. Applying Lemma~\ref{l:SuzRep} to $L_1$, we get that $C_K(x)\neq~1$. The group $L_2$ acts on $C_K(x)$, and applying Lemma~\ref{l:SuzRep} again, we see that $y$ has a fixed point in~$C_K(x)$. Thus, $prs\in\omega(G)$. Hence $p=2$ and, say, $L_1$ acts on $K$ faithfully. Since $L_1$ has Frobenius subgroups with kernels of orders $m_3(q)$, $m_4(q)$ and cyclic complements of order $4$, it follows by Lemma~\ref{l:frob} that $G$ contains an element of order~$8$.

Therefore, $K=1$. If at least one of the numbers $\alpha_1$ and $\alpha_2$ is distinct from~$\alpha$, then $r_2r_4\not\in\omega(G)$. Hence, $L_1\simeq L_2\simeq L$ and $\Aut(L_1\times L_2)=(L_1\times L_2)\rtimes (\langle \varphi\rangle\wr\langle \tau\rangle)$, where $\varphi$ is some fixed automorphism of $L_1$ of odd order $\alpha$ and $\tau$ is the automorphism of  $L_1\times L_2$ of order $2$ interchanging the components. If $2\in\pi(G/\Soc(G))$,
then we may assume that $\tau\in G$, and taking $g$ to be an element of $L_1$ of order $4$, we see that $8=|g\tau|\in\omega(G)$. This is false, and so $G\leq \Aut L_1\times \Aut L_2$. 

Suppose $G\neq L_1\times L_2$. 
Then $G$ contains $\psi=(\psi_1,\psi_2)$, where $\psi_i$ is a field automorphism of $L_i$, $i=1,2$,
and $|\psi|=p$ is a prime. It is clear that $$C_{L\times L}(\psi)=C_{L_1}(\psi_1)\times C_{L_2}(\psi_2)\simeq Sz(q^{1/|\psi_1|})\times Sz(q^{1/|\psi_2|}).$$  Assume that $q^{1/|\psi_i|}>2$ for some $i$. Then there is $s\in\pi(C_{L_i}(\psi_i))$ such   $s\neq 2$, $s\neq p$ and $ps\not\in\omega(L)$. For this prime $s$, we have $2ps\in\omega(G)\setminus\omega(L\times L)$, a contradiction. It follows that $|\psi_1|=|\psi_2|=p$ and $q=2^p$. On the other hand, $p\in\pi(G)=\pi(L)$, and so $p$ divides $(2^{p}-1)(2^{2p}+1)$. This yields $p=5$. Thus if $q\neq 2^5$, then $G\simeq L\times L$, and this completes the proof of Theorem \ref{th:suzuki}.

Let $q=2^5$ and $M=(L_1\times L_2)\rtimes \langle \psi\rangle$, where  $\psi$ induces on each factor $L_i$  a field automorphism of order $5$. We claim that $\omega(M)=\omega(L\times L)$. 
Let $g\in M\setminus(L_1\times L_2)$. Then $g=(g_1\psi_1,g_2\psi_2)$, where 
$|\psi_1|=|\psi_2|=5$. By Item 1 of Lemma \ref{l:large_orders}, for every $i=1,2$,  the order of $g_i\psi_i$ lies in $5\cdot \omega(Sz(2))$ and hence divides $20$ or $25$. Since the order of $g$ is equal to the least common multiple of $|g_1\psi_1|$ and $|g_2\psi_2|$, this order divides $100$ and, therefore, lies in $\omega(L\times L)$.

It remains to find the number of pairwise nonisomorphic groups $M$. Groups $A$ and $B$ such that $L_1\times L_2\leq A,B\leq \Aut(L_1\times L_2)$ are isomorphic if and only if their images in $\Out(L_1\times L_2)$ are conjugate. 
Recall that $\Aut(L_1\times L_2)=(L_1\times L_2)\rtimes (\langle \varphi\rangle\wr\langle \tau\rangle)$.  We denote the images of $\varphi$ and $\tau$ in $\Out(L_1\times L_2)$ by the same symbols. Then the image of $M$ in $\Out(L_1\times L_2)$ is the cyclic group $X_l$ generated by $(\varphi^l,\varphi^\tau)$ for some $1\leqslant l\leqslant 4$. It is clear that $X_l$ and $X_m$, with $l\neq m$, are conjugate if and only if $X_l^\tau=X_m$. It follows that $X_2$ and $X_3$ are conjugate, while $X_1$, $X_2$ and $X_4$ are pairwise not conjugate. Thus, up to isomorphism, there are four groups isospectral to $L\times L$: $L_1\times L_2$ and the full preimages of $X_l$, with $l=1,2,4$, in  $\Aut(L_1\times L_2)$. This completes the proof of Theorem \ref{th:suzuki32}.

~

Zhigang Wang,

School of Science, Hainan University, Haikou, Hainan, P. R. China;

wzhigang@hainanu.edu.cn

\medskip

Andrey V. Vasil'ev, Sobolev Institute of Mathematics,  Novosibirsk, Russia;

School of Science, Hainan University, Haikou, Hainan, P. R. China;

vasand@math.nsc.ru

\medskip

Maria A. Grechkoseeva,

Sobolev Institute of Mathematics,  Novosibirsk, Russia;

grechkoseeva@gmail.com

\medskip

Archil Kh. Zhurtov,

Kabardino-Balkarian State University, Nalchik, Russia;

zhurtov\!\_a@mail.ru


\begin{thebibliography}{10}


\bibitem{21Survey_arxiv}
M.~A. Grechkoseeva, V.~D. Mazurov, W.~Shi, A.~V. Vasil'ev, and N.~Yang,
  \emph{Finite groups isospectral to simple groups}, Commun. Math. Stat., in press; arXiv:2111.15198[math.GR].

\bibitem{95Zha}
J.~Zhang, \emph{Arithmetical conditions on element orders and group structure},
  Proc. Amer. Math. Soc. \textbf{123} (1995), no.~1, 39--44.

\bibitem{94Kel}
T.~M. Keller, \emph{Solvable groups with a small number of prime divisors in
  the element orders}, J. Algebra \textbf{170} (1994), no.~2, 625--648.

\bibitem{21GorMas}
I.~B. Gorshkov and N.~V. Maslova, \emph{{The group $J_4\times J_4$ is
  recognizable by spectrum}}, J. Algebra Appl. \textbf{20} (2021), no.~4,
  2150061.

\bibitem{11VasVd.t}
A.~V. Vasil'ev and E.~P. Vdovin, \emph{{Cocliques of maximal size in the prime
  graph of a finite simple group}}, Algebra Logic \textbf{50} (2011), no.~4,
  291--322.

\bibitem{97Maz1.t}
V.~D. Mazurov, \emph{Recognition of finite nonsimple groups by the set of
  orders of their elements}, Algebra Logic \textbf{36} (1997), no.~3, 182--192.

\bibitem{92Shi}
W.~J. Shi, \emph{{A characterization of Suzuki's simple groups}}, Proc. Amer.
  Math. Soc. \textbf{114} (1992), no.~3, 589--591.

\bibitem{77Pod.t}
{\it N. D. Podufalov}, {Finite simple groups without elements of sixth order}, Algebra and Logic, {\bf 16}, no.~2 (1977), 133--135.

\bibitem{77Gor}
{\it L. M. Gordon}, Finite simple groups with no elements of order six, Bull.
  Austral. Math. Soc., {\bf 17}, No.~2 (1977), 235--246.

\bibitem{77FSS}
{\it L. R. Fletcher, B. Stellmacher, W. B. Stewart}, Endliche {G}ruppen,
  die kein {E}lement der {O}rdnung $6$ enthalten, Quart. J. Math. Oxford Ser.
  (2), {\bf 28}, No.~110 (1977), 143--154.
  
\bibitem{86Bang}
A.~S. Bang, \emph{{Taltheoretiske Unders{\o}gelser}}, Tidsskrift Math.
  \textbf{4} (1886), 70--80, 130--137.

\bibitem{Zs}
K.~Zsigmondy, \emph{{Zur Theorie der Potenzreste}}, Monatsh. Math. Phys.
  \textbf{3} (1892), 265--284.

\bibitem{57Hig}
G.~Higman, \emph{Finite groups in which every element has prime power order},
  J. London Math. Soc. \textbf{32} (1957), 335--342.

\bibitem{56HalHig}
P.~Hall and G.~Higman, \emph{{On the $p$-length of $p$-soluble groups and
  reduction theorem for Burnside's problem}}, Proc. London Math. Soc. (3)
  \textbf{6} (1956), 1--42.

\bibitem{99ZavMaz.t}
A.~V. Zavarnitsine and V.~D. Mazurov, \emph{{Element orders in coverings of
  symmetric and alternating groups}}, Algebra Logic \textbf{38} (1999), no.~3,
  159--170.

\bibitem{97Maz.t}
V.~D. Mazurov, \emph{{Characterizations of finite groups by sets of orders of
  their elements}}, Algebra Logic \textbf{36} (1997), no.~1, 23--32.

\bibitem{15Vas}
A.~V. Vasil'ev, \emph{{On finite groups isospectral to simple classical
  groups}}, J. Algebra \textbf{423} (2015), 318--374.

\bibitem{62Suz}
M.~Suzuki, \emph{{On a class of doubly transitive groups}}, Ann. of Math. (2)
  \textbf{75} (1962), 105--145.

\bibitem{66AlpGor}
J.~L. Alperin and D.~Gorenstein, \emph{The multiplicators of certain simple
  groups}, Proc. Amer. Math. Soc. \textbf{17} (1966), 515--519.

\bibitem{21TiepZal_arxiv}
P.~H. Tiep and A.~E. Zalesski, \emph{{H}all--{H}igman type theorems for
  exceptional groups of {L}ie type, {I}}, 2021, arXiv:2106.03224 [math.RT].

\bibitem{17Gr.t}
M.~A. Grechkoseeva, \emph{{Orders of elements of finite almost simple groups}},
  Algebra Logic \textbf{56} (2018), no.~6, 502--505.

\bibitem{68Gor}
D.~Gorenstein, \emph{{Finite groups}}, Harper \& Row Publishers, New York,
  1968.

\bibitem{99Luc}
M.~S. Lucido, \emph{{Prime graph components of finite almost simple groups}},
  Rend. Semin. Mat. Univ. Padova \textbf{102} (1999), 1--22.

\end{thebibliography}
\end{document}